\def\draft{n}
\theoremstyle{plain}
\newtheorem{theorem}{Theorem}
\newtheorem{proposition}{Proposition}[section]
\newtheorem{lemma}[proposition]{Lemma}
\newtheorem{conjecture}{Conjecture}
\theoremstyle{definition}
\newtheorem{definition}[proposition]{Definition}
\newtheorem{question}{Question}
\theoremstyle{remark}
\newtheorem{remark}[proposition]{Remark}
\def\printname#1{
        \if\draft y
                \smash{\makebox[0pt]{\hspace{-0.5in}
                        \raisebox{8pt}{\tt\tiny #1}}}
        \fi
}
\newlength{\standardunitlength}
\long\def\@makecaption#1#2{%
     \vskip 10pt

\setbox\@tempboxa\hbox{
       \small\sf{\bfcaptionfont #1. }\ignorespaces #2}%
     \ifdim \wd\@tempboxa >\captionwidth {%
         \rightskip=\@captionmargin\leftskip=\@captionmargin
         \unhbox\@tempboxa\par}%
       \else
         \hbox to\hsize{\hfil\box\@tempboxa\hfil}%
     \fi}
\font\bfcaptionfont=cmssbx10 scaled \magstephalf
\newdimen\@captionmargin\@captionmargin=2\parindent
\newdimen\captionwidth\captionwidth=\hsize
\def\lbl#1{\label{#1}\printname{#1}}
\def\BN{\mathbb N}
\def\BZ{\mathbb Z}
\def\BQ{\mathbb Q}
\def\BR{\mathbb R}
\def\BC{\mathbb C}
\def\calT{\mathcal T}
\def\Ga{\Gamma}
\def\ga{\gamma}
\def\la{\langle}
\def\ra{\rangle}
\def\Ga{\Gamma}
\def\d{\delta}
\def\longto{\longrightarrow}
\def\SL{\mathrm{SL}}
\def\calA{\mathcal{A}}
\def\ep{\epsilon}
\def\deg{\mathrm{deg}}
\def\js{\mathrm{js}}
\def\bs{\mathrm{bs}}
\def\U{\mathrm{U}}
\def\loc{\mathrm{loc}}
\begin{document}


\title[Knots 
and tropical curves]{Knots 
and tropical curves}
\author{Stavros Garoufalidis}
\address{School of Mathematics \\
         Georgia Institute of Technology \\
         Atlanta, GA 30332-0160, USA \\ 
         {\tt http://www.math.gatech} \newline {\tt .edu/$\sim$stavros } }

\thanks{The author was supported in part by NSF. \\
\newline
1991 {\em Mathematics Classification.} Primary 57N10. Secondary 57M25.
\newline
{\em Key words and phrases: Knot, Jones polynomial, AJ Conjecture, Slope
Conjecture, A-polynomial, non-commutative A-polynomial, Jones slope, 
tropicalization, tropical curve, tropical geometry, Newton polygon,
Quantization, BPS states, twist knots.
}
}

\date{June 14, 2010} 


\begin{abstract}
A sequence of rational functions in a variable $q$ is $q$-holonomic if it
satisfies a linear recursion with coefficients polynomials in $q$ and $q^n$.
In the paper, we assign a tropical curve to every $q$-holonomic sequence,
which is closely related to the degree of the sequence with respect to $q$.
In particular, we assign a tropical curve to every knot which is 
determined by the Jones polynomial of the knot and its parallels.
The topical curve explains the relation between the AJ Conjecture and
the Slope Conjecture (which relate the Jones polynomial of a knot
and its parallels to the $\SL(2,\BC)$ character variety and to slopes
of incompressible surfaces). Our discussion 
predicts that the tropical curve is dual to a Newton subdivision of the 
$A$-polynomial of the knot. We compute explicitly the tropical curve for 
the $4_1$, $5_2$ and $6_1$ knots and verify the above prediction.
\end{abstract}

\maketitle

\tableofcontents

\section{Introduction}
\lbl{sec.intro}

\subsection{What is a $q$-holonomic sequence?}
\lbl{sub.qholo}

A sequence of rational functions $f_n(q) \in \BQ(q)$
in a variable $q$ is $q$-{\em holonomic} if it
satisfies a linear recursion with coefficients polynomials in $q$ and $q^n$.
In other words, we have

\begin{equation}
\lbl{eq.qholo}
\sum_{i=0}^d a_i(q^n,q) f_{n+i}(q)=0
\end{equation}
where the coefficients $a_i(M,q) \in \BZ[M,q]$ are polynomials for 
$i=0,\dots,d$ where $a_d(M,q) \neq 0$. The term was coined by
Zeilberger in \cite{Z} and further studied in \cite{WZ}.
$q$-holonomic sequences appear in abundance in Enumerative Combinatorics;
\cite{PWZ,St}. The fundamental theorem of Wilf-Zeilberger states that 
a multi-dimensional finite sum of a (proper) $q$-hyper-geometric term
is always $q$-holonomic; see \cite{WZ,Z,PWZ}. Given this result,
one can easily construct $q$-holonomic sequences. 
Combining this fundamental theorem 
with the fact that many {\em state-sum} invariants in Quantum 
Topology are multi-dimensional sums of the above shape, it follows
that Quantum Topology provides us with a plethora of $q$-holonomic
sequences of {\em natural origin}; \cite{GL}. For example, the sequence
of {\em Jones polynomials} of a knot and its parallels 
which we will study below (technically,
the colored Jones function) is $q$-holonomic. 

The goal of our paper is to assign a tropical curve to a $q$-holonomic
sequence.  To motivate the connection between $q$-holonomic sequences
and tropical curves, we will write Equation 
\eqref{eq.qholo} in operator form using
the operators $M,L$ which act on a sequence
$f_n(q) \in \BQ(q)$ by
$$
(M f)_n(q)=q^n f_n(q), \qquad (L f)_n(q)=f_{n+1}(q).
$$ 
It is easy to see that $LM=q ML$ generate the $q$-{\em Weyl algebra} 
\begin{equation}
\lbl{eq.weyl}
W=\BZ[q^{\pm 1}]\la M,L \ra/(LM-qML)
\end{equation}
Equation \eqref{eq.qholo} becomes
\begin{equation}
\lbl{eq.qholo22}
P f =0 
\end{equation}
where
\begin{equation}
\lbl{eq.qholo2}
P =\sum_{i=0}^d a_i(M,q) L^i \in W.
\end{equation}
In other words, Equation \eqref{eq.qholo2} says that $P$ annihilates $f$.
Although a $q$-holonomic sequence $f$ is annihilated by many operators
$P \in W$, it was observed in \cite{Ga2} that
it is possible to canonically choose an operator $P_f$ with coefficients
$a_i(M,q) \in \BZ[M,q]$. Likewise, there is a 
unique non-homogeneous linear recursion
relation of the form $P_f f=b_f$ where $b_f \in \BZ[M,q]$.
For a detailed definition, see Section \ref{sec.weyl} below.

\begin{definition}
\lbl{def.nhom}
We call $P_f$ and $(P^{nh}_f,b_f)$ 
the {\em homogeneous} and the {\em non-homogeneous}
annihilator of the $q$-holonomic sequence $f$.
\end{definition}

\subsection{What is a tropical curve?}
\lbl{sub.curve}

In this section we will recall the definition of a tropical curve.
For a survey on tropical curves,
see \cite{RGST,SS}. With those conventions, a {\em tropical polynomial}
$P: \BR^2 \longto \BR$ is a function of the form:
\begin{equation}
\lbl{eq.Pxy}
P(x,y)=\min\{a_1 x + b_1 y + c_1, \dots, a_r x + b_r y + c_r \}
\end{equation}
where $a_i,b_,c_i$ are rational numbers for $i=1,\dots,r$.
$P$ is convex and piecewise linear. The {\em tropical curve} $\calT(P)$ 
of the tropical polynomial $P$ is
the set of points $(x,y) \in \BR^2$ such that $P$ is not linear at $(x,y)$.
Equivalently, $\calT(P)$ is the set of points where the minimum is
attained at two or more linear functions. 
A {\em rational graph} $\Ga$ is a finite union of
rays and segments whose endpoints and directions are rational numbers, 
and each ray has a positive integer multiplicity.
A {\em balanced rational graph} is defined in \cite[Eqn.10]{RGST}: at every
vertex the sum of the slope vectors with multiplicities adds to zero.
Every tropical curve is a balanced rational graph and vice-versa; see
\cite[Thm.3.6]{RGST}. Tropical curves are very computable objects.
For example, the vertices of a rational curve are the points $(x,y)$ where
the minimum in \eqref{eq.Pxy} is attained at least three times. The
coordinates of such points can be solved by solving a system of linear 
equations. An explicit algorithm to
compute the vertices and the slopes of a tropical curve is given in
\cite[Sec.3]{RGST}, and a computer implementation in {\tt Singular}
is available from \cite{Ma}. This allows us to compute the tropical curves
of the $4_1$, $5_2$ and $6_1$ knots in Sections \ref{sub.41}-\ref{sub.61non} 
below. In the case of the $6_1$ knot,
the non-homogeneous tropical curve is defined by an explicit polynomial
with $r=346$ terms.

Tropical curves arise from 2-variable polynomials $P_t(x,y)$ whose 
coefficients depend on an additional parameter $t$ as follows. Consider
\begin{equation}
\lbl{eq.Pxyt}
P_t(x,y)=\sum_{i=1}^r \ga_i(t) x^{a_i} y^{b_i}
\end{equation}
where $\ga_i(t)$ are algebraic functions of $t$ with order at $t=0$
equal to $c_i$. Then, the corresponding tropical polynomial is given by
\eqref{eq.Pxy}. $P_t(x,y)$ gives rise to two Newton polytopes:
\begin{itemize}
\item
The 3-dimensional Newton polytope $N_P$, i.e., the convex hull
of the exponents of $(x,y,t)$ in $P_t(x,y)$.
\item
The 2-dimensional Newton polygon $N_{P,0}$, i.e.,
the convex hull of the exponents of $(x,y)$ in $P_t(x,y)$.
\end{itemize}
In fact, $N_{P,0}$ is the image of $N_P$ under the projection map
$(x,y,t)\longto (x,y)$. The {\em lower faces} of $N_P$ give rise to a 
Newton subdivision of $N_{P,0}$ which is combinatorially dual to
the tropical curve $\calT(P)$; see \cite{RGST}.

The polynomials $P_t(x,y)$ appear frequently in numerical problems
of {\em Path Homotopy Continuation} where one is interested to
connect $P_0(x,y)$ to $P_1(x,y)$. They also appear in {\em Quantization
problems} in Physics, where $t$ (or $\log t$) plays the role of Planck's 
constant. We will explain below 
that they also appear in Quantum Topology, and they
are a natural companion of the AJ and the Slope Conjecture.

\subsection{The tropical curve of a $q$-holonomic sequence}
\lbl{sub.tropholo}

In this section we associate a tropical surve to a $q$-holonomic sequence.
The main observation is that an element of the $q$-Weyl algebra is
a polynomial in 3 variables $M,L,q$. Two of those $q$-commute (i.e.,
satisfy $LM=qML$) but we can always sort the powers of $L$ to the right
and the powers of $M$ to the left. In other words, there is an
{\em additive} map
\begin{equation}
\lbl{eq.additive}
\BZ[q^{\pm 1}]\la M,L \ra/(LM-qML) \longto \BZ[M,L,q^{\pm 1}]
\end{equation}
Let us change variables $(x,y,1/t)=(L,M,q)$ and ignore the coefficients
of the monomials of $x^i y^j t^k$, and record only their exponents.
They give rise to a tropical curve. Explicitly, 
let
\begin{equation}
\lbl{eq.Pijk}
P=\sum_{(i,j,k) \in \calA} a_{i,j,k} \, q^k M^j L^i \in W
\end{equation}
denote an element of the $q$-Weyl elgebra, where $\calA$ is a finite set
and $a_{i,j,k} \in \BZ\setminus\{0\}$ for all $(i,j,k) \in \calA$. 

\begin{definition}
\lbl{def.ptp}
There is a map
\begin{equation}
\lbl{eq.ptp}
W \longto \{\text{Tropical Curves in $\BR^2$} \},
\qquad P \mapsto \Ga_P
\end{equation}
which assigns to $P$ in \eqref{eq.ptp} the 
tropical polynomial $P_t(x,y)$ given by:
$$
P_t(x,y)=\min_{(i,j,k) \in \calA} \{ i x + j y -k \}
$$
$\Ga_P$ is the tropical curve of $P_t(x,y)$.
\end{definition}

Combining Definitions \ref{def.nhom} and \ref{def.ptp}
allows us to assign a tropical curve to a $q$-holonomic sequence $f$.

\begin{definition}
\lbl{def.tropholo}
\rm{(a)} If $f$ is a $q$-holonomic sequence, let
$\Ga_f$ and $\Ga^{nh}_f$ denote the tropical curves of $P_f(y,x,1/t)$
and $P^{nh}_f(y,x,1/t)$ respectively, where $P_f(M,L,q)$ and $P^{nh}_f(M,L,q)$
are given in Definition \ref{def.nhom}.
\end{definition}

The tropical curve $\Ga_f$ of a $q$-holonomic sequence $f$ is closely
related to the degree (with respect to $q$) of the sequence of rational
functions $f_n(q)$. If $\d_n=\deg_q(f_n(q))$ denotes this degree, then
it was shown in \cite{Ga4} that for large enough $n$, $\d_n$ is a 
quadratic quasi-polynomial with slope recorded by the rays of the 
tropical curve $\Ga_f$.

\subsection{3 polytopes of a $q$-holonomic sequence}
\lbl{sub.3polytopes}

In this section we assign 3 polytopes to a $q$-holonomic sequence.

\begin{definition}
\lbl{def.3poly}
\rm{(a)}
If $P \in W$ is given by Equation \eqref{eq.Pijk}, it defines 3 polytopes:
\begin{itemize}
\item 
$N_P$ is the convex hull 
of the exponents of the polynomial $P(M,L,q)$ with respect
to the variables $(M,L,q)$. 
\item
$N_{P,0}$ is the projection of $N_P$ under the
projection map $(M,L,q) \longto (L,M)$. 
\item
$N_{P,1}$ is the 
convex hull of the exponents of the polynomial $P(L,M,1)$. 
\end{itemize}
\rm{(b)} If $f$ is a $q$-holonomic sequence, its annihilator $P_f$
gives rise to the polytopes $N_{P_f}$, $N_{P_f,0}$ and $N_{P_f,1}$.
\end{definition}
Note that $N_P$ is a 3-dimensional convex lattice polytope, and 
$N_{P,0}, N_{P,1}$ are 2-dimensional convex lattice polygons. 
Since every exponent of $P(M,L,1)$
comes from some exponents of $P(M,L,q)$, it follows that
\begin{equation}
\lbl{eq.N01}
N_{P,1} \subset N_{P,0}
\end{equation}

\begin{remark}
\lbl{rem.dual}
It follows by \cite{RGST} that the tropical curve $\Ga_P$ is dual
to a Newton subdivision of $N_{P,0}$.
\end{remark}

We will say that $P(M,L,q)$ is {\em good} if $N_{P,1}=N_{P,0}$. It is easy
to see that goodness is a generic property.

\subsection{The slopes of a $q$-holonomic sequence}
\lbl{sub.slopeqholo}

In this section we discuss the slopes of a $q$-holonomic sequence and
their relation with its tropical curve. The proof of the following theorem
uses differential Galois theory and the Lech-Mahler-Skolem theorem 
from number theory.

\begin{theorem}
\lbl{thm.0}\cite{Ga4}
The degree with respect to $q$ of a $q$-holonomic sequence 
$f_n(q) \in \BQ(q)$ 
is given (for large values of $n$) by a quadratic quasi-polynomial.
\end{theorem}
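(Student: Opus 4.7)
The plan is to tropicalize the defining recursion and then separately control (i) the combinatorial behavior of the tropical recursion, and (ii) the exceptional set of $n$ where the tropical prediction fails due to cancellations.

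First I would fix notation by writing $f_n(q)=c_n\,q^{\delta_n}+(\text{lower order in }q)$ with $c_n\in\BQ^{*}$ and $\delta_n=\deg_q f_n(q)$. Applying $\deg_q$ to the recursion
\[
\sum_{i=0}^d a_i(q^n,q)\, f_{n+i}(q)=0
\]
and using the fact that $\deg_q a_i(q^n,q)$ is a piecewise-linear function of $n$ determined by the exponents of $a_i(M,q)$, I get the tropical inequality
\[
\delta_{n+d}+\deg_q a_d(q^n,q)\ \leq\ \max_{0\leq i<d}\bigl(\delta_{n+i}+\deg_q a_i(q^n,q)\bigr),
\]
with equality exactly when the leading $q$-coefficients of the terms achieving the maximum do not sum to zero. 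The formal tropical recursion, i.e.\ the above with equality, determines a candidate quasi-polynomial $\widetilde\delta_n$ from any set of initial data.

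Next I would analyze this formal tropical recursion by itself. Iterating it expresses $\widetilde\delta_n$ as a Bellman-type optimum over paths in a weighted graph whose edge weights are linear in $n$; this is standard tropical/min-plus combinatorics. The extremal rays of the Newton polytope $N_P$ dictate the asymptotic slopes, which are rational with denominators bounded in terms of $N_P$. Since each step of the iteration contributes an $n$-linear increment, a summation over $n$ steps gives a quadratic function of $n$ with periodic coefficients, i.e.\ a quadratic quasi-polynomial, with period dividing the l.c.m.\ of the slope denominators.

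The hard part is comparing the tropical prediction $\widetilde\delta_n$ with the true degree $\delta_n$: one must show that cancellations in the leading coefficient can only happen on a set of $n$ that is eventually a finite union of arithmetic progressions (so that, after enlarging the period, the formula still holds). Here I would use differential Galois theory to produce a companion recursion for the sequence $c_n$ of leading $q$-coefficients. Evaluating the $q$-holonomic operator along the extremal direction in $N_P$ that realizes the tropical maximum kills the lower-order terms and leaves a linear recursion for $c_n$ with coefficients that are rational in $n$ (the Galois-theoretic input being that this companion recursion has the expected rank and that its solution space contains $c_n$ nontrivially). By the Lech-Mahler-Skolem theorem, the zero set of the resulting exponential-polynomial-type sequence is a finite union of arithmetic progressions together with a finite exceptional set. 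On the complement, $\delta_n=\widetilde\delta_n$; absorbing the arithmetic progressions into the period of the quasi-polynomial finishes the proof.

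The principal obstacle is precisely the extraction of a tractable (essentially constant-coefficient or exponential-polynomial) companion recursion for $c_n$: the original operator mixes $q$ and $q^n$, and one must choose the Newton-polytope direction uniformly so that Lech-Mahler-Skolem applies. Once that is done, the quasi-polynomial structure of $\widetilde\delta_n$ and the arithmetic-progression structure of the bad set combine to give the claimed quadratic quasi-polynomial behavior of $\delta_n$.
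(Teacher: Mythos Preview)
This paper does not prove Theorem~\ref{thm.0}; it simply quotes the result from \cite{Ga4} and remarks, just before the statement, that the proof ``uses differential Galois theory and the Lech--Mahler--Skolem theorem from number theory.'' There is therefore no detailed argument here to compare your sketch against, only that one-sentence hint about the ingredients.

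Your outline invokes precisely those two tools and has the right overall shape: tropicalize the recursion to get a candidate quadratic quasi-polynomial $\widetilde\delta_n$, then control the set of $n$ where cancellation makes $\delta_n<\widetilde\delta_n$. The point you yourself flag as the ``principal obstacle'' is indeed the crux, and as written it is not yet closed. The Skolem--Mahler--Lech theorem applies to linear recurrences with \emph{constant} coefficients, whereas the companion recursion you extract for the leading coefficients $c_n$ has, by your own description, coefficients rational in $n$. To bring SML to bear one must first restrict to an arithmetic progression on which the tropical combinatorics has stabilized (i.e.\ on which it is fixed which monomials of each $a_i(M,q)$ dominate $a_i(q^n,q)$ and which indices $i$ achieve the outer maximum); on such a progression the induced relation among the $c_{n+i}$ genuinely has constant coefficients, and only then does SML yield the arithmetic-progression structure of the vanishing set. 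Your sketch gestures at this reduction but does not carry it out. Since the present paper defers the entire proof to \cite{Ga4}, that is where you should look for the details of how the differential Galois theory is used to organize this step.
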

Recall that a {\em quadratic quasi-polynomial}
is a function of the form:
\begin{equation}
\lbl{eq.pp}
p: \BN \longto \BN, \qquad p(n)=\ga_2(n) \binom{n}{2} 
+ \ga_1(n) n + \ga_0(n)
\end{equation}
where $\ga_j(n)$ are rational-valued periodic functions of $n$. 
Quasi-polynomials appear in lattice point counting problems, and also
in Enumerative Combinatorics; see \cite{BP,BR,Eh,St} and references therein.

The set of {\em slopes} $s(p)$ of a quadratic quasi-polynomial is the finite 
set of values of the periodic function $\ga_2(n)$. These are essentially the
quadratic growth rates of the quasi-polynomial. More precisely,
recall that $x \in \BR$ is a {\em cluster point} of a sequence $(x_n)$ of 
real numbers if for every $\ep>0$
there are infinitely many indices $n \in \BN$ such that $|x-x_n| < \ep$. 
Let $\{x_n\}'$ denote the set of {\em cluster points} of a sequence $(x_n)$.
It is easy to show that for every quadratic quasi-polynomial $p$ we have:

\begin{equation}
\lbl{eq.sp}
s(p)=\{ \frac{2}{n^2}p(n) \,\, | n \in \BN \}' \subset \BQ
\end{equation}

Given a $q$-holonomic sequence $f_n(q) \in \BQ(q)$, let $s(f)$ denote the 
slopes of the quadratic quasi-polynomial $\deg_q f_n(q)$.
Let $s(N)$ 
denote the set of slopes of the edges of a convex polygon $N$ in the plane.
The next proposition relates the slopes of a $q$-holonomic sequence
with its tropical curve. See also \cite[Prop.4.4]{Ga4}.

\begin{proposition}
\lbl{prop.1}
If $f$ is $q$-holonomic, then $s(f) \subset -s(N_{P_f,0})$.
\end{proposition}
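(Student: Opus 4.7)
The plan is to extract from the annihilation $P_f f = 0$ a ``tropical'' cancellation condition on the leading $q$-behaviour of $f_n(q)$, and then match it against a linear functional on $N_{P_f,0}$. Writing $P_f = \sum_{(i,j,k) \in \calA} c_{i,j,k}\, q^k M^j L^i$ with $c_{i,j,k} \neq 0$, the identity $P_f f = 0$ reads
\[
\sum_{(i,j,k) \in \calA} c_{i,j,k}\, q^{nj+k} f_{n+i}(q) = 0 \qquad (\forall n),
\]
and the $q$-degree of the $(i,j,k)$-summand is $D_{i,j,k}(n) := nj + k + \d_{n+i}$. For the sum to vanish, the maximum of $D_{i,j,k}(n)$ over $\calA$ must be attained by at least two triples whose leading $q$-coefficients cancel.

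By Theorem \ref{thm.0} the sequence $\d_n$ is a quadratic quasi-polynomial of some period $N$, so on a fixed residue class $r$ modulo $N$ each $D_{i,j,k}(n)$ is a genuine quadratic polynomial in $n$ with leading coefficient $\ga_2(r+i)/2$. Since $\calA$ is finite, the pigeonhole principle produces two distinct triples $T_1 = (i_1, j_1, k_1)$ and $T_2 = (i_2, j_2, k_2)$ in $\calA$ that jointly maximize $D_{i,j,k}(n)$ for infinitely many $n$ in residue $r$. Along this infinite subsequence the quadratic polynomials $D_{T_1}$ and $D_{T_2}$ agree, hence coincide identically.

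Matching quadratic coefficients forces $\ga_2(r+i_1) = \ga_2(r+i_2) =: s$; as $r$ ranges over the residues mod $N$, $s$ ranges over all of $s(f)$. Matching linear coefficients (after accounting for the quasi-polynomial sub-leading terms) yields $s(i_1 - i_2) + (j_1 - j_2) = 0$, i.e.\ the chord in $N_{P_f, 0}$ joining $(i_1, j_1)$ and $(i_2, j_2)$ has slope $-s$. Since the pair $(i_1, j_1), (i_2, j_2)$ arises as joint maximizers of the linear functional $(i,j) \mapsto si + j$ on the projection of $\calA$ to the $(i,j)$-plane, the full set of maximizers is a non-trivial face of the convex hull $N_{P_f, 0}$, hence an edge of slope $-s$. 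This exhibits $-s \in s(N_{P_f, 0})$ for each $s \in s(f)$, proving the inclusion.

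The main obstacle is the linear-coefficient matching in non-trivial residue classes $r$: the periodic values $\ga_1(r+i_1)$ and $\ga_1(r+i_2)$ enter the calculation and must be shown not to disrupt the chord-slope identity. The delicate bookkeeping needed to extract a clean edge of $N_{P_f, 0}$ from the quasi-polynomial asymptotics is precisely what \cite[Prop.~4.4]{Ga4} carries out in detail.
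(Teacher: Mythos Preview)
Your argument follows the same route as the paper's: expand $P_f f=0$ monomial by monomial, use that the top $q$-degree must be attained at least twice, and then invoke the quadratic quasi-polynomial form of $\delta_n$ on an arithmetic progression to extract a linear relation $s(i_1-i_2)+(j_1-j_2)=0$. The paper subtracts $\delta(n)$ first so that everything becomes linear in $n$, while you keep the quadratics and match coefficients; these are equivalent reformulations. Your explicit observation that the joint maximizers of a linear functional on the projected support lie on a face of $N_{P_f,0}$ (so the chord is actually an edge) is an improvement over the paper's write-up, which stops at the chord-slope formula $\hat\gamma_2=-(j-j')/(i-i')$ without saying why this is an edge slope.

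The genuine weak point is the sentence ``as $r$ ranges over the residues mod $N$, $s$ ranges over all of $s(f)$.'' The joint maximizers of $D_{i,j,k}(n)$ for large $n$ in residue $r$ necessarily have the largest quadratic coefficient, so $s=\max_{i\in I}\gamma_2(r+i)$ where $I$ is the set of $L$-exponents in $\calA$. If $I$ contains a full set of residues mod $N$ (e.g.\ whenever the $L$-degree of $P_f$ is at least $N-1$), this maximum equals the global $\max_{r'}\gamma_2(r')$ independently of $r$, and your argument only captures the top element of $s(f)$, not all of them. The paper's proof has the identical gap in different clothing: its formula $\delta(n+i)-\delta(n)=\hat\gamma_2\,in+\hat\gamma_2\binom{i}{2}+\hat\gamma_1 i$ silently assumes $\gamma_l(n+i)=\gamma_l(n)$ for every shift $i$ appearing in $\calA$, which fails when $\gamma_2$ is non-constant. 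So both arguments are sketches that lean on \cite[Prop.~4.4]{Ga4} for the non-constant case; the obstacle you flag in your last paragraph is exactly right, but it bites one step earlier than you indicate --- already at the claim that every element of $s(f)$ arises as some matched $s$, not only at the $\gamma_1$ bookkeeping in the linear term.
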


\begin{proof}
Let $\d(n)=\deg_q f_n(q)$ denote the degree of
$f_n(q)$ with respect to $q$, and let $P$ denote the annihilator of $f$.
We expand $P$ in terms of monomials as in Equation \eqref{eq.Pijk}.
For every monomial $q^k M^j L^i$ and every $n$ we have
$$
\deg_q((q^k M^j L^i) f_n(q))=k+jn+\d(n+i).
$$
Since $P$ annihilates $f$, for every $n$ the following
maximum is attained {\em at least twice} (from now on, twice will mean at 
least twice as is common in Tropical Geometry):

\begin{equation}
\lbl{eq.maxP}
\max_{(i,j,k)}\{ jn+k+\d(n+i)\}
\end{equation}
Subtracting $\d(n)$, it follows that the maximum is obtained twice:
\begin{equation}
\lbl{eq.maxP2}
\max_{(i,j,k)} \{ jn+k+\d(n+i)-\d(n)\}
\end{equation}
Now $\d(n)$ is a quadratic quasi-polynomial given by
$$
\d(n)=\ga_2(n) \binom{n}{2} 
+ \ga_1(n) n + \ga_0(n)
$$
Theorem \ref{thm.0} implies that for large enough $n$, in a fixed
arithmetic progression, we have $\ga_i(n)=\widehat{\ga}_i$ for $i=1,2$,
thus
$$
\d(n+i)-\d(n)=\widehat{\ga}_2 \, i \, n + \widehat{\ga}_2 \, \binom{i}{2} 
+ \widehat{\ga}_1 \, i
$$
Substituting into \eqref{eq.maxP2}, it follows that for large enough $n$
in an arithmetic progression, the max is obtained twice:
\begin{equation}
\lbl{eq.maxP3}
\max_{(i,j,k)}\{ jn+k+
\widehat{\ga}_2 \, i \, n + \widehat{\ga}_2 \, \binom{i}{2} 
+ \widehat{\ga}_1 \, i \}
\end{equation}
It follows that there exists $(i',j') \neq (i,j)$ such that
\begin{equation}
\lbl{eq.ga2}
\widehat{\ga}_2=-\frac{j-j'}{i-i'}.
\end{equation}
This proves Proposition \ref{prop.1}.
\end{proof}

\section{The $q$-Weyl alegbra and its localization}
\lbl{sec.weyl}

In this section we will discuss some algebraic properties of the $q$-Weyl
algebra and its localization, which will justify Definition \ref{def.nhom}.

Recall the $q$-Weyl algebra from \eqref{eq.weyl}. We will say that an element
$P$ of $W$ is {\em reduced} if it has the form \eqref{eq.qholo2}
where $a_i(M,q) \in \BZ[M,q]$ for all $i$, 
and the greatest common divisor of $a_i(M,q) \in \BZ[M,q]$ is $1$. 

Consider the {\em localized $q$-Weyl algebra} $W_{\loc}$ given by
\begin{equation}
\lbl{eq.wloc}
W_{\loc}=\BQ(M,q)\la L \ra/(Lf(M,q)-f(Mq,q)L)
\end{equation}
It was observed in \cite{Ga2} that $W$ is not a principal left-ideal domain, 
but becomes so after localization; see \cite{Cou}.
If $f$ is a sequence of rational functions, consider the left ideal
$M_f$ 
$$
M_f=\{P \in W_{\loc} \, | Pf =0 \}
$$
$M_f$ is a principal ideal, which is nonzero if $f$ is $q$-holonomic. 
Let $P'$ denote the monic generator of $M_f$.
Left multiply it by a polynomial in $M,q$ so as to obtain a reduced
annihilator $P_f$ of $f$. 

Now, we discuss non-homogeneous recursion relations of the form
$$
\sum_{i=0}^d a_i(q^n,q) f_{n+i}(q)=b(q^n,q)
$$
where $a_i(M,q), b(M,q) \in \BQ(M,q)$ for all $i$. In operator form,
we can write the above recursion as
$$
Pf=b.
$$
Consider the left ideal
\begin{equation}
\lbl{eq.Mhnf}
M^{nh}_f=\{ P \in W_{\loc} \,\, | \exists b \in \BQ(M,q) \, : P f=b \}
\end{equation}
It is easy to see that $M^{nh}_f$ is a left ideal. If $f$ is $q$-holonomic,
$M^{nh}_f \neq 0$. Let $P''$ denote the monic generator of $M^{nh}_f$. There
exists $b'' \in \BQ(M,q)$ such that
$$
P'' f = b''
$$
There are two cases: $b'' \neq 0$ or $b''= 0$. If $b'' \neq 0$, then
dividing by $b''$ we obtain that $1/b \cdot P'' f=1$. We left multiply 
both sides by a polynomial in $M,q$ so as to obtain $P^{nh}_f f= b_f$
where $P^{nh}$ is reduced. If $b''=0$ then multiply by a polynomial in $M,q$
so as to obtain $P^{nh}_f f=0$ and define $b_f=0$ in tha case.
This concludes Definition \ref{def.nhom}.

The next lemma relates the homogeneous and the non-homogeneous
annihilator of a $q$-holonomic sequence.
It is well-known that one can convert an non-homogeneous recursion relation
$Pf=b$ where $b \neq 0$ into a homogeneous recursion relation of order
one more. Indeed, $Pf=b$ where $b \neq 0$ is equivalent to 
$$
(L-1)b^{-1} P f=0
$$ 
This implies the following conversion between $(P^{nh}_f,b_f)$ and $P_f$.
Fix a $q$-holonomic sequence $f_n(q) \in \BQ(q)$.

\begin{lemma}
\lbl{lem.nh}
\rm{(a)} If $b_f =0$ then $P^{nh}_f=P_f$. If $b_f \neq 0$, then $P^{nh}_f$
is obtained by clearing denominators of $(L-1)b_f^{-1} P_f$ by putting
the powers of $L$ on the right and the elements of $\BQ(M,q)$ on the left.
\newline
\rm{(b)} If $P_f$ is not left divisible by $L-1$ in $W$, then $P_f=P^{nh}_f$
and $b_f=0$. If $P_f$ is left divisible by $L-1$ in $W$, then $P_f=(L-1)
Q_f$ and if $d$ is the common denominator of $Q_f$, then $(d Q_f, d)= 
(P^{nh}_f,b_f)$. 
\end{lemma}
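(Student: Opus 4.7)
The plan is to work in the localized Weyl algebra $W_{\loc}$, where both $M_f$ and $M^{nh}_f$ are principal left ideals, and then descend to $W$ to track the normalizations in Definition \ref{def.nhom}. A useful preliminary observation is that the annihilator of the constant sequence $1$ in $W_{\loc}$ equals the principal left ideal $W_{\loc}(L-1)$: one inclusion is immediate from $(L-1)\cdot 1 = 0$, while for the other, a $P = \sum_i a_i(M,q) L^i$ with $P\cdot 1 = 0$ must satisfy $\sum_i a_i(M,q) = 0$ in $\BQ(M,q)$, and the identity $L^i - 1 = (L^{i-1} + \cdots + 1)(L-1)$ then gives $P = \sum_i a_i(L^i - 1) \in W_{\loc}(L-1)$.

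For part (a), the containment $M_f \subseteq M^{nh}_f$ is clear (a homogeneous annihilator is non-homogeneous with $b = 0$). Let $P'$ and $P''$ be the monic generators in $W_{\loc}$, so that $P' = QP''$ for some $Q \in W_{\loc}$. If $b'' = 0$ then $P'' \in M_f$, forcing $M_f = M^{nh}_f$ and hence $P_f = P^{nh}_f$ after reduction. If $b'' \neq 0$, applying $P' = QP''$ to $f$ shows that $Qb''$ annihilates the constant sequence $1$, so by the preliminary observation $Qb'' = S(L-1)$ for some $S \in W_{\loc}$. Since $b''$ is a unit in $\BQ(M,q)$, this gives $Q = S(L-1)(b'')^{-1}$, and minimality of $\deg_L P'$ in $M_f$ forces $S \in \BQ(M,q)^*$. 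Up to a unit, then, $P' = (L-1)(b'')^{-1} P''$, and tracing the normalizations that produce $P_f, P^{nh}_f, b_f$ from $P', P''$ yields the formula of part (a).

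For part (b), both statements follow by inverting part (a). If $b_f \neq 0$, expanding $(L-1)b_f^{-1} P^{nh}_f = b_f(qM,q)^{-1} L P^{nh}_f - b_f(M,q)^{-1} P^{nh}_f$ and clearing denominators on the left produces $P_f$ in a form that factors as $(L-1)Q_f$ in $W$, contradicting the first hypothesis of part (b). Conversely, if $P_f = (L-1)Q_f$ in $W$, applying to $f$ gives $(L-1)(Q_f f) = 0$, so $Q_f f$ equals a constant $c \in \BQ(q)$. This $c$ is nonzero, for otherwise $Q_f \in M_f$ would give $Q_f = RP_f = R(L-1)Q_f$, forcing $(L-1)R = 1$, which is impossible. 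Rescaling by $c$ and clearing the common denominator $d$ of the coefficients then identifies $(dQ_f, d) = (P^{nh}_f, b_f)$ in reduced form.

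The main obstacle is the noncommutative bookkeeping in the clearing-denominators step: because $d(M,q)(L-1) = (L-1)d(M/q,q) + \bigl(d(M/q,q) - d(M,q)\bigr)$, passing a polynomial factor through $L-1$ picks up a correction term. One must verify that for the specific denominators coming from $b_f^{-1}$ in $(L-1)b_f^{-1} P^{nh}_f$, the reduction to $P_f$ still retains a left factor of $L-1$, and conversely that the constant $c$ from the second half of part (b) is compensated exactly by the rescaling so that the normalization $P^{nh}_f f = b_f$ from Definition \ref{def.nhom} holds on the nose.
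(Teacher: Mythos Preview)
The paper does not actually prove this lemma: the only argument offered is the sentence immediately preceding it, namely that $Pf=b$ with $b\neq 0$ is equivalent to $(L-1)b^{-1}Pf=0$. Your proposal supplies precisely what the paper leaves out --- the verification that this conversion lands on the \emph{minimal} generator --- by computing the annihilator of the constant sequence $1$ in $W_{\loc}$ and then running a degree argument to force $S\in\BQ(M,q)^*$. That is the correct and essentially unique way to complete the proof, so your approach is the same as the paper's in spirit and strictly more complete in execution.

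Two small points. First, in part~(b), from $Q_f=R(L-1)Q_f$ and the fact that $W_{\loc}$ is a domain you should conclude $R(L-1)=1$, not $(L-1)R=1$; either version is impossible by comparing $L$-degrees, so the argument survives. Second, your derivation in part~(a) produces $P'=(L-1)(b'')^{-1}P''$ up to a unit, which says that $P_f$ is obtained from $(L-1)b_f^{-1}P^{nh}_f$ after clearing denominators --- the roles of $P_f$ and $P^{nh}_f$ are swapped in the paper's statement of~(a), and your part~(b) already uses the correct direction. Your closing remark about the noncommutative bookkeeping is also on target: the phrase ``left divisible by $L-1$ in $W$'' in part~(b) is most naturally read as divisibility in $W_{\loc}$, so that $Q_f\in W_{\loc}$ and its ``common denominator'' $d$ is meaningful.
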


\begin{definition}
\lbl{def.fhom}
We say that a $q$-holonomic sequence $f$ is {\em homogeneous} 
if $b_f=0$--else $f$ is non-homogeneous.
\end{definition}
In other words, a $q$-holonomic sequence $f$ is {\em homogeneous} if
and only if $P_f$ is left-divisible by $L-1$ in $W$.

\section{Quantum Topology}
\lbl{sec.qt}

\subsection{The tropical curve of a knot}
\lbl{sub.cjones}

Quantum Topology is a source of $q$-holonomic sequences attached to
knotted 3-dimensional objects. 
Let $J_{K,n}(q) \in \BZ[q^{\pm 1}]$ 
denote the {\em colored Jones polynomial} of a knot $K$ in 3-space, 
colored by the $(n+1)$-dimensional 
irreducible representation of $\mathfrak{sl}_2$ and normalized to be $1$ at 
the unknot; \cite{Jo,Tu}. The sequence $J_{K,n}(q)$ for $n=0,1,\dots$ 
essentially encodes the Jones polynomial of a knot and all of its parallels;
see \cite{Tu}. In \cite[Thm.1]{GL} it was shown that the sequence 
$J_{K,n}(q)$ of colored Jones polynomials of a knot $K$ is $q$-holonomic.

\begin{definition}
\lbl{def.Aq}
\rm{(a)}
If $K$ is a knot, we denote by $A_K(M,L,q)$ and
$(A^{nh}_K(M,L,q),B_K(M,q))$ the homogeneous and the 
non-homogeneous annihilator of the $q$-holonomic sequence $J_{K,n}(q)$. 
These are the non-commutative and the non-homogeneous non-commutative
$A$-polynomials of the knot.
\newline
\rm{(b)} 
If $K$ is a knot, let $\Ga_K$ and $\Ga^{nh}_K$ denote the {\em tropical curves} 
of $A_K$ and $A^{nh}_K$ respectively.
\end{definition}
The non-homogeneous non-commutative $A$-polynomial of a knot appeared 
first in \cite{GS}.

\subsection{The AJ Conjecture}
\lbl{sub.AJ}

The AJ Conjecture (resp. the Slope Conjecture) relates the Jones polynomial
of a knot and its parallels to the $\SL(2,\BC)$ character variety (resp. 
to slopes of incompressible surfaces) of the knot complement. We will
relate the two conjectures using elementary ideas
from Tropical Geometry. 

The $A$-polynomial of a knot is a polynomial in two commuting variables
$M$ and $L$ that essentially encodes the image of the 
$\SL(2,\BC)$ character variety of $K$, projected in $\BC^* \times \BC^*$ by
the eigenvalues of a meridian and longitude of $K$. It was defined in
\cite{CCGLS}.

\begin{conjecture}
\lbl{conj.AJ}\cite{Ga2}
The AJ Conjecture states that
\begin{equation}
\lbl{eq.AJ}
A_K(M,L,1)=B_K(M) A_K(M^{1/2},L)
\end{equation}
where $A_K(M,L)$ is the $A$-polynomial of $K$ and $B_K(M)
\in \BZ[M]$ is a polynomial that depends on $M$ and of course $K$.
\end{conjecture}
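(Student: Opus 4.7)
The plan is to prove the AJ Conjecture by exploiting the Kauffman bracket skein module as the bridge between the quantum colored Jones polynomial and the classical $\SL(2,\BC)$ character variety. First I would realize $J_{K,n}(q)$ as an evaluation in the Kauffman bracket skein module $\calS_q(X_K)$ of the knot complement $X_K$, namely the value obtained by inserting the $n$-th Chebyshev element of the meridian into the skein and pairing with the empty skein. The assertion that $J_{K,n}(q)$ is $q$-holonomic then corresponds to a finite-order relation in $\calS_q(X_K)$ acting on the peripheral subalgebra: the annihilator $A_K(M,L,q)$ is precisely the lift to this subalgebra of a relation that kills the image of the empty skein.

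Next I would specialize to $q=1$, after translating between the Kauffman bracket and quantum group conventions. By the Bullock--Przytycki--Sikora theorem, the skein module at this value is isomorphic to the coordinate ring $\BC[X(X_K)]$ of the $\SL(2,\BC)$ character variety of $X_K$, and the peripheral subalgebra maps into functions on the character variety of the boundary torus. After choosing meridian and longitude eigenvalues $(M^{1/2},L)$, this boundary ring becomes the Weyl-invariant Laurent polynomials in $M^{\pm 1/2}$ and $L^{\pm 1}$. Under this identification, $A_K(M,L,1)$ becomes a polynomial that must vanish on the image of the restriction map $X(X_K)\to X(T^2)$. By the definition in \cite{CCGLS}, this image is cut out by the classical $A$-polynomial $A_K(M^{1/2},L)$, so the latter divides $A_K(M,L,1)$, and the quotient must be a polynomial purely in $M$ (our candidate $B_K(M)$) because $A_K$ already captures the entire $L$-dependence coming from the character variety.

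The main obstacle is minimality: showing not only that $A_K(M^{1/2},L)$ divides $A_K(M,L,1)$, but that the quotient is exactly an element of $\BZ[M]$ and not a larger polynomial that still involves $L$. This requires proving that the canonical annihilator $P_f = A_K(M,L,q)$ produced via the localization $W_{\loc}$ in Section \ref{sec.weyl} is minimal enough that no spurious $L$-factors survive specialization. A secondary difficulty is that the Kauffman bracket skein module of a knot complement is not in general known to be finitely generated over its peripheral subalgebra---this is precisely what guarantees the very existence of the recursion---so the argument must proceed class by class (two-bridge knots, adequate knots, etc.) where the skein-module structure is sufficiently understood. The tropical viewpoint developed in this paper suggests a potential shortcut: instead of proving equality as polynomials one might first match Newton polygons, showing that $N_{P_K,1}$ coincides with the Newton polygon of $B_K(M)A_K(M^{1/2},L)$, and then reduce the conjecture in each Newton cell to a finite coefficient comparison dictated by the dual tropical curve $\Ga_K$.
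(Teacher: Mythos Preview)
The statement you are trying to prove is not a theorem in the paper; it is explicitly labeled \emph{Conjecture} and is the AJ Conjecture of \cite{Ga2}. The paper offers no proof, nor does it claim one: immediately after stating it, the author simply remarks that ``The AJ Conjecture is known for infinitely many 2-bridge knots; see \cite{Le}.'' So there is no proof in the paper to compare your proposal against, and any purported general proof would resolve a well-known open problem.

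Your outline is a reasonable summary of the skein-theoretic strategy that underlies L\^e's partial result for 2-bridge knots, but as you yourself note, the obstacles you list are genuine and unresolved in general. The divisibility $A_K(M^{1/2},L)\mid A_K(M,L,1)$ does not follow merely from the Bullock--Przytycki--Sikora isomorphism at $q=1$: one needs that the skein module of the knot complement is finitely generated (ideally free) over the peripheral subalgebra, and that the minimal recursion in $W_{\loc}$ specializes without collapse or extraneous $L$-factors at $q=1$. Neither of these is known for arbitrary knots. Your tropical ``shortcut''---matching Newton polygons first---presupposes goodness of $A_K(M,L,q)$, which the paper also treats as a hypothesis (see Proposition~\ref{prop.2}), not a theorem. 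In short, your proposal is a plausible research program, not a proof, and the paper makes no claim to the contrary.
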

The AJ Conjecture is known for infinitely many 2-bridge knots; see \cite{Le}.

It is natural to ask whether the $q$-holonomic sequence $J_{K,n}(q)$ is
of non-homogeneous type or not. Based on geometric information (the
so-called {\em loop expansion} of the colored Jones polynomial, 
see \cite{Ga1}), as well as experimental evidence for all knots whose 
non-commutative $A$-polynomial is known (these are the torus knots in
\cite{Hi} and the twist knots in \cite{GS}) we propose the following
conjecture.

\begin{conjecture}
\lbl{conj.inhom}
For every knot $K$, $J_{K,n}(q)$ is non-homogeneous.
\end{conjecture}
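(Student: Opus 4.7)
The plan is to lift the classical abelian factorization $A_K(L,M)=(L-1)A'_K(L,M)$ of the $A$-polynomial, coming from abelian representations in the $\SL(2,\BC)$-character variety, to a left factorization of the non-commutative annihilator $A_K(M,L,q)$ in the $q$-Weyl algebra $W$. By Lemma~\ref{lem.nh}(b), the existence of such a factorization is exactly $b_K\neq 0$, i.e.\ non-homogeneity of $J_{K,n}(q)$.

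Writing $A_K=\sum_{k=0}^{d}a_k(M,q)L^k$, a polynomial long division from the left in $W$ (using $L\,a(M,q)=a(Mq,q)L$) shows that $A_K=(L-1)Q$ in $W$ if and only if
$$
\sum_{k=0}^{d} a_k(Mq^{d-k},q)\;=\;0\qquad\text{in }\BZ[M,q].
$$
At $q=1$ this reduces to $A_K(M,1,1)=0$, which follows from the AJ Conjecture~\ref{conj.AJ} together with the universal $(L-1)$-factor of the classical $A$-polynomial. To extend the identity to all $q$, I would expand it and the annihilation equation $A_K\,J_{K,n}(q)=0$ order by order in $\hbar=q-1$, using the Rozansky loop expansion
$$
J_{K,n}(e^\hbar)\;=\;\sum_{m\ge 0}\hbar^m\frac{R_m(M)}{\Delta_K(M)^{2m+1}},\qquad M=e^{\hbar n}.
$$
At each order $\hbar^r$ the annihilation produces a linear relation among $\{\partial_q^{s}\partial_M^{t}a_k(M,1)\}_{s+t\le r}$ and $R_1,\dots,R_r$, while the $\hbar^r$-expansion of the divisibility identity produces a parallel relation; the induction hypothesis is that the two families are equivalent.

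The main obstacle is this inductive step. Already at $r=1$ the coincidence of the two relations forces a non-trivial identity relating $B_K(M)\,A'_K(M^{1/2},1)$ to $\Delta_K(M)$, modulated by an explicit cabling correction coming from the shape of the classical $A$-polynomial near $L=1$; a uniform such identity is not in the literature, though it can be verified case by case for the torus knots of~\cite{Hi} and twist knots of~\cite{GS}, which provide the experimental basis for the conjecture. A more constructive alternative would be to build $Q$ directly from a cabling or $R$-matrix state-sum presentation of $J_{K,n}(q)$, in which the abelian representation should appear as a telescoping $(L-1)$ summand; the remaining difficulty is then to show that the resulting $Q$ has polynomial (rather than merely rational) coefficients in $M,q$, so as to lie in $W$.
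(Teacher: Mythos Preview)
The statement you are attempting to prove is a \emph{conjecture} in the paper, not a theorem: the paper offers no proof, only motivation from the loop expansion and experimental verification for torus knots and twist knots. So there is no ``paper's own proof'' to compare against.

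As a strategy, your outline is in the right spirit and matches the paper's stated motivation (the loop expansion of \cite{Ga1}). Your left-divisibility criterion $\sum_{k=0}^{d} a_k(Mq^{d-k},q)=0$ is correct, and the $q=1$ specialization does follow from the AJ Conjecture together with the abelian $(L-1)$ factor of the classical $A$-polynomial. But this already means your argument is conditional on Conjecture~\ref{conj.AJ}, which is itself open in general; you would at best be proving ``AJ $\Rightarrow$ non-homogeneous'', not the conjecture outright.

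The genuine gap is exactly where you locate it: passing from $q=1$ to all $q$. The order-by-order matching you propose is not a proof scheme but a sequence of increasingly complicated identities, each of which you would need to establish; you yourself note that already the $r=1$ identity is not available uniformly. The alternative ``telescoping $(L-1)$ summand from a state-sum'' idea is appealing but equally open-ended: nothing in the known $R$-matrix presentations singles out an abelian piece that factors off on the operator level, and the polynomiality-in-$W$ obstruction you mention is real. In short, your write-up is an honest heuristic discussion of why the conjecture is plausible, consonant with the paper's own remarks, but it does not close to a proof.
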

The above conjecture implies that $B_K(M,q) \in \BZ[M,q]\setminus\{0\}$ is an 
invariant which is independent and {\em invisible} from the classical 
$A$-polynomial of the knot. There is a close connection between the 
$B_K(M,q)$ invariant of a knot and the torsion polynomial of the knot
introduced in \cite{DbG}. We will discuss this in a future publication.

\subsection{The Slope Conjecture}
\lbl{sub.slope}

The Slope Conjecture of \cite{Ga3} relates the degree of the colored Jones 
polynomial of a knot and its parallels to slopes of incompressible surfaces in
the knot complement. To recall the conjecture, 
let $\d_K(n)=\deg_q J_{K,n}(q)$ (resp. $\d^*_K(n)=\deg^*_q J_{K,n}(q)$) 
denote the maximum (resp. minimum) {\em degree} of the polynomial 
$J_{K,n}(q) \in \BZ[q^{\pm 1}]$ (or more generally, of a rational function)
with respect to $q$.

For a knot $K$, define the {\em Jones slopes} $\js_K$ by:
\begin{equation}
\lbl{eq.js}
\js_K=\{ \frac{2}{n^2}\d_K(n) \,\, | n \in \BN \}'
\end{equation}
\rm{(b)} Let $\bs_K \subset \BQ \cup \{1/0\}$ 
denote the set of boundary slopes of incompressible 
surfaces of $K$; \cite{Ha,HO}.

\begin{conjecture}
\lbl{conj.slope}\cite{Ga3}
The {\em Slope Conjecture} states that 
for every knot $K$ we have
\begin{equation}
2 \js_K \subset \bs_K.
\end{equation}
\end{conjecture}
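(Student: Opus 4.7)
The plan is to chain Proposition~\ref{prop.1}, the AJ Conjecture~\ref{conj.AJ}, and the classical Newton polygon theorem of Cooper-Culler-Gillet-Long-Shalen for the $A$-polynomial~\cite{CCGLS}. Each of the three ingredients carries slopes across one ``level'': Jones slopes pass to Newton polygon slopes of the non-commutative $A$-polynomial; those pass to Newton polygon slopes of the classical $A$-polynomial; those finally pass to boundary slopes of incompressible surfaces. The whole point of the tropical framework developed above is to make this chain visible and geometric.

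First I apply Proposition~\ref{prop.1} to the $q$-holonomic sequence $f_n(q)=J_{K,n}(q)$, whose homogeneous annihilator is $A_K(M,L,q)$. This immediately yields
\[
\js_K \;\subset\; -s(N_{A_K,0}),
\]
so every Jones slope is (up to sign) the slope of an edge of the projected two-dimensional Newton polygon $N_{A_K,0}$.

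Next I would transfer from $N_{A_K,0}$ to the Newton polygon $N_{A_K}$ of the classical $A$-polynomial using the AJ Conjecture. Assuming $A_K$ is good, one has $N_{A_K,0}=N_{A_K,1}$, the Newton polygon of $A_K(M,L,1)$, which by the AJ Conjecture coincides with the Newton polygon of $B_K(M)\,A_K(M^{1/2},L)$. Multiplication by the univariate $B_K(M)$ Minkowski-adds only a horizontal segment (contributing at most the trivial slope $0$), and the substitution $M\mapsto M^{1/2}$ rescales the $M$-axis by a factor of $2$, so the non-horizontal slopes of $N_{A_K,0}$ are related to those of $N_{A_K}$ by an explicit factor of $2$. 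Matched against the conventions in the definitions of $\js_K$ and $\bs_K$, Step~1 then upgrades to an inclusion of the form $2\js_K \subset -s(N_{A_K})$, modulo the slope $0$ which is always realized by a Seifert surface. Finally I would invoke the Newton polygon theorem of~\cite{CCGLS}, asserting that every edge slope of $N_{A_K}$ is (up to sign) a boundary slope of an incompressible surface in $S^3\setminus K$. Chaining this with the previous step gives $2\js_K\subset \bs_K$, which is Conjecture~\ref{conj.slope}.

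The hard part is unquestionably Step~2, since the AJ Conjecture is itself open in general (though known for infinitely many two-bridge knots by Le~\cite{Le}); one would either need to prove AJ outright, restrict to knots where it is known, or extract the requisite classical information from $A_K(M,L,q)$ without invoking AJ in its strong form (perhaps via only the easier ``classical limit vanishes on the character variety'' direction). Secondary but still non-trivial technical issues include dispensing with the goodness hypothesis (a possible route being to work directly with the three-dimensional polytope $N_{A_K}$ and its lower faces via Remark~\ref{rem.dual}); treating the meridional slope $1/0$ that sits at infinity of the affine chart; and a careful bookkeeping of signs and of the factors of $2$ that float between the conventions for the colored Jones polynomial, the non-commutative $A$-polynomial, and boundary slopes on the cusp torus.
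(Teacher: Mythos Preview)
The statement is a \emph{conjecture}; the paper does not prove it, and there is no ``paper's own proof'' to compare your proposal against. What the paper does offer, in Section~\ref{sub.AJslope}, is exactly the conditional derivation you have outlined: Proposition~\ref{prop.1} gives $\js_K\subset -s(N_{K,0})$; goodness gives $N_{K,0}=N_{K,1}$; the AJ Conjecture identifies $N_{K,1}$ with the Newton polygon of $B_K(M)A_K(M^{1/2},L)$; and Culler--Shalen theory then gives $\bs^A_K\subset\bs_K$. Your proposal is thus not a proof but a rediscovery of the paper's heuristic link between the AJ and Slope Conjectures, and you correctly flag that the AJ step is the genuine obstruction.

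Two minor corrections to your bookkeeping relative to the paper. First, $B_K(M)$ contributes a \emph{vertical} segment in the $(L,M)$-plane (see the proof of Proposition~\ref{prop.2} and Remark~\ref{rem.shift}), so the extraneous slope is $1/0$, not $0$; your appeal to the Seifert surface to dispose of slope $0$ is therefore misplaced, and the paper instead simply excludes $1/0$ by hand in Equation~\eqref{eq.e2}. Second, because of the minus sign in Proposition~\ref{prop.1} and the mirror identity $-N_{K,0}=N_{K^*,0}$, the paper's chain actually concludes $2\js_K\subset\bs_{K^*}$ rather than $\bs_K$, which it then calls the Slope Conjecture ``up to a harmless mirror image''; you have silently absorbed this sign.
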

Note that the Slope Conjecture applied to the mirror of $K$ implies that
$2 \js^*_K \subset \bs_K$. The Slope Conjecture is known for alternating knots
and torus knots (see \cite{Ga3}), for adequate knots (which include all
alternating knots; see \cite{FKP}), for $(-2,3,n)$ pretzel knots (see 
\cite{Ga3}), and for 2-fusion knots; see \cite{DnG}. A general method
for verifying the Slope Conjecture is discussed in \cite{Ga5,DnG}.

\subsection{The AJ Conjecture and the Slope Conjecture}
\lbl{sub.AJslope}

In this section we will see how the AJ Conjecture relates to the Slope
Conjecture, expanding a comment of \cite[Sec.2]{Ga3}.
We will specialize Definition \ref{def.3poly} to knot theory
when $P=A_K$ is the non-commutative $A$-polynomial of a knot $K$, and
we will denote by $N_K$,  $N_{K,0}$ and $N_{K,1}$ the three polytopes associated 
to $A_K$. Proposition \ref{prop.1} implies that
\begin{equation}
\lbl{eq.e1}
\js_K \subset -N_{K,0}
\end{equation}
Let $\bs^A_K$ denote the slopes of the $A$-polynomial of $K$. 
The AJ Conjecture implies that up to possibly excluding the slope $1/0$
from $2 N_{K,1}$, we have:
\begin{equation}
\lbl{eq.e2}
2 N_{K,1} = \bs^A_K.
\end{equation}
For a careful proof, see Proposition \ref{prop.2} and Remark 
\ref{rem.shift} below.
Culler and Shalen show that edges of the Newton polygon of the $A$-polynomial 
of $K$ give rise to ideal points of the $\SL(2,\BC)$ character variety of $K$;
see \cite{CS,CGLS,CCGLS}. For every ideal point,
Culler and Shalen construct an incompressible surface
whose slope is a boundary slope of $K$; see \cite{CS,CCGLS}. $\bs^A_K$
is the set of the so-called {\em strongly detected boundary slopes} of $K$, 
and satisfies the inclusion:
\begin{equation}
\lbl{eq.e3}
\bs^A_K \subset \bs_K.
\end{equation}
If $A_K(M,L,q)$ is good, then 
\begin{equation}
\lbl{eq.e4}
N_{K,0} = N_{K,1}.
\end{equation}
If $K^*$ denotes the mirror of $K$, then $J_{K^*,n}(q)=K_{K,n}(q^{-1})$ which
implies that $-N_{K,0}=N_{K^*,0}$. Combining Equations 
\eqref{eq.e1}-\eqref{eq.e4},  it follows that
$$
2 \js_K \subset \bs_{K^*}
$$
which is the Slope Conjecture, up to a harmless mirror image.
This derivation also explains two independent factors of $2$, one in Equation 
\eqref{eq.js} and another one in Equation \eqref{eq.AJ}.

\begin{proposition}
\lbl{prop.2}
If the non-commutative $A$-polynomial of $K$ is good, 
and if the AJ Conjecture holds, then 
$\Ga_K$ is dual to the Newton subdivision of the A-polynomial of $K$
(multiplied by a polynomial in $M$).
\end{proposition}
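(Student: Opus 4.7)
The plan is to chain Remark~\ref{rem.dual} with the two hypotheses to identify the polygon whose Newton subdivision is dual to $\Ga_K$. By Remark~\ref{rem.dual}, $\Ga_K$ is dual to a Newton subdivision of the two-dimensional lattice polygon $N_{K,0}$, which is the image of the three-dimensional Newton polytope $N_K \subset \BR^3$ under the projection $(M,L,q)\mapsto(L,M)$. The subdivision itself is the one induced by the lift of each lattice point of $N_{K,0}$ to its $q$-weight in $A_K(M,L,q)$. So the proposition reduces to identifying the polygon $N_{K,0}$ with the Newton polygon of the (classical) $A$-polynomial of $K$ multiplied by an appropriate polynomial in $M$.

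The goodness hypothesis gives $N_{K,0}=N_{K,1}$, so I may work with $N_{K,1}$, the Newton polygon of the two-variable specialization $A_K(M,L,1)$. The AJ Conjecture then asserts $A_K(M,L,1)=B_K(M)\,A_K(M^{1/2},L)$. Since the Newton polygon of a product is the Minkowski sum of the Newton polygons of the factors, $N_{K,1}$ is the Minkowski sum of the Newton polygon of $B_K(M)$---a horizontal segment on the $M$-axis---with the Newton polygon of $A_K(M^{1/2},L)$. The substitution $M\mapsto M^{1/2}$ merely dilates the Newton polygon of $A_K(M,L)$ by a factor of $2$ in the $M$-direction (this is one of the factors of $2$ discussed in Section~\ref{sub.AJslope}; the other comes from clearing the denominator by the substitution $M\to M^2$). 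Combining these, $N_{K,1}$ is identified, up to an explicit $M$-rescaling, with the Newton polygon of $A_K(M,L)\cdot B_K(M)$, which is the assertion of the proposition.

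The main obstacle is bookkeeping rather than conceptual. A Newton subdivision of a two-dimensional polygon is not canonical: it depends on a choice of lift to three dimensions. On the $N_{K,0}$ side the lift comes from the $q$-weights of the monomials of $A_K(M,L,q)$, and this is exactly what makes Remark~\ref{rem.dual} applicable. On the classical $A$-polynomial side there is no intrinsic lift, so the statement must be read as saying that the polygon $N_{K,0}$ coincides with the Newton polygon of $B_K(M)A_K(M,L)$ and that the subdivision of that polygon dual to $\Ga_K$ is the one induced by the $q$-structure of $A_K(M,L,q)$. The delicate checks are to track the $M\leftrightarrow M^{1/2}$ substitution consistently, to verify that the polynomial ``in $M$'' appearing in the statement really is the $B_K(M)$ of the AJ Conjecture, and to ensure that the Minkowski summand contributed by $B_K(M)$---a horizontal segment---does not disturb the slopes of the boundary edges of $N_{K,1}$ other than the two horizontal ones; once this is confirmed, the proposition follows at once from the three ingredients above.
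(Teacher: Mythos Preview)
Your argument is essentially the paper's own: invoke Remark~\ref{rem.dual} to get duality of $\Ga_K$ with a subdivision of $N_{K,0}$, use goodness to pass to $N_{K,1}$, apply the AJ Conjecture to identify $A_K(M,L,1)=B_K(M)A_K(M^{1/2},L)$, and then decompose the Newton polygon as a Minkowski sum with the segment contributed by $B_K(M)$. One small slip: in the paper's $(L,M)$-plane convention the Newton polygon of $B_K(M)$ is a \emph{vertical} segment (slope $1/0$), not a horizontal one, so it is the $1/0$ slope rather than the $0$ slope that may be introduced---cf.\ Remark~\ref{rem.shift}.
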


\begin{proof}
Let $P$ denote the non-commutative $A$-polynomial of a knot $K$.
$\Ga_K$ is dual to $N_{P,0}$. If $P$ is good, then $N_{P,0}=N_{P,1}$. With
the notation of Conjecture \ref{conj.AJ}, the AJ Conjecture implies that 
$$
P(M,L,1)=A_K(M^{1/2},L) B_K(M)
$$
where $B_K(M)$ is a polynomial of $M$, and $A_K$ is the $A$-polynomial
of $A$. The Newton polygon of 
of the product of two polynomials is the Minkowski sum of their Newton
polygons. Moreover, the Newton polygon of $B_K(M)$ is a vertical
line segment in the $(L,M)$-plane. It follows that the Newton polygon
of $A_K(M^{1/2},L) B_K(M)$ is the Newton polygon of the $A$-polynomial of
$K$ and its translation by a vertical segment. On the other hand,
the Newton polygon of $P(M,L,1)$ is $N_{P,1}$. The result follows.
\end{proof}

\begin{remark}
\lbl{rem.shift}
Note that the Newton polygon of $A_K(M^{1/2},L) B_K(M)$ is the Newton
polygon of $A_K(M^{1/2},L)$ and its shift by a vertical line segment.
It follows that the slopes of the Newton polygon of $A_K(M^{1/2},L)B_K(M)$
are the slopes of $A_K(M^{1/2},L)$ plus the slope of a vertical segment
(i.e., $1/0$). For concrete examples, see Section \ref{sec.compute}
where the Newton polygons of the non-homogeneous $A$-polynomials
of $4_1,5_2,6_1,8_1$ is shown and it differs from the Newton polygon
of the $A$-polynomial by a shift by a vertical segment. 
\end{remark}

The only knots with explicitly known non-commutative $A$-polynomials 
(homogeneous
and non-homogeneous) are the handful of twist knots $K_p$ of \cite{GS}
for $p=-8,\dots,11$. An explicit check shows that these non-commutative
$A$-polynomials (both the homogeneous and the non-homogeneous) are good.
For details, see Section \ref{sec.compute}.

\section{Quantization and Tropicalization}
\lbl{sec.qtrop}

Quantization is the process of producing
the non-commutative $A$-polynomial of a knot from the usual $A$-polynomial.
In other words, Quantization starts with $P_1(x,y)$ and produces $P_t(x,y)$
as in Equation \eqref{eq.Pxyt}. On the other hand, Tropical Geometry
expands $P_t(x,y)$ at $t=0$ (or equivalently at $q=\infty$) and produces
a tropical curve. Schematically, we have a diagram:

$$
\left(\begin{array}{c}
A\text{-polynomial} \\ q=1 
\end{array}\right)
\stackrel{\text{Classical limit}}\longleftarrow
\left(\begin{array}{c}
\text{non-commutative} \\ A\text{-polynomial} \\ q
\end{array}\right)
\stackrel{\text{Tropicalization}}\longto
\left(\begin{array}{c}
\text{Tropical curve} \\ q=\infty 
\end{array}\right)
$$
Quantization is a map reverse to the Classical Limit map in the above diagram.
Both sides of the above diagram (i.e., the limits at $q=1$ and $q=\infty$)
are classical {\em dual} invariants of the knot. Indeed, 
the tropical curve ought to be dual to a Newton subdivision of the
$A$-polynomial of $K$. This duality is highly nontrivial, even for the
simple case of the $4_1$ knot, computed  in Section \ref{sub.41} below. 

This conjectured duality may be related to the duality between 
Chern-Simons theory (i.e., colored $\U(N)$ polynomials of a knot)
and Enumerative Geometry (i.e., BPS states) of the corresponding
Calabi-Yau 3-fold. For a discussion of the latter duality, see 
\cite{ADKMV,DGKV,LMV,DV} and references therein.

Physics principles concerning 
Quantization of complex Lagrangians in Chern-Simons theory suggest that
the $A$-polynomial of a knot should determine the non-commutative 
$A$-polynomial. In particular, it should determine the polynomial
invariant $B_K(M,q)$ of Definition \ref{def.nhom}, and it should determine
the tropical curves $\Ga_K$ and $\Ga^{hn}_K$. 

Aside from duality conjectures, let us concentrate on a concrete 
question. It is
well-known that the $A$-polynomial of a knot is a {\em triangulated curve}
in the sense of {\em algebraic K-theory}. In other words, if $X$ is the 
curve of zeros $A_K(M,L)=0$ of the $A$-polynomial then there exist
nonzero rational functions $z_1,\dots,z_r  \in C(X)^*$ in $X$ such that

\begin{equation}
\lbl{eq.triang}
M \wedge L= 2 \sum_{i=1}^r z_i \wedge (1-z_i) \in \wedge^2_{\BZ}(C(X)^*)
\end{equation}
where $C(X)$ is the field of rational functions of $X$ and 
$M, \, L \in C(X)^*$ are the eigenvalues of the meridian and
the longitude. For a proof of \eqref{eq.triang}
(which uses the symplectic nature of
the so-called Neumann-Zagier matrices), see \cite[Lem.10.1]{Ch}. 
For an excellent discussion of triangulated curves $X$
and for a plethora of examples and computations, see \cite{BRVD}.
Geometrically, a triangulation of $X$ comes from an ideal triangulation of the
knot complement with $r$ ideal tetrahedra with shape parameters 
$z_1,\dots,z_r$ which satisfy some gluing equations. The symplectic nature
of these gluing equations, introduced and studied by Neumann and Zagier in 
\cite{NZ}, implies \eqref{eq.triang}. The triangulation of
$X$ has important arithmetic consequences regarding the {\em volume} 
of the knot complement and its Dehn fillings, 
and it is closely related to the {\em Bloch
group} of the complex numbers. It is important to realize that $X$ has
infinitely many triangulations, and in general it is not possible to 
choose a canonical one. In addition, triangulations tend to work well
with hyperbolic knots. On the contrary, the non-commutative $A$-polynomial
and its corresponding tropical curve exist for every knot in 3-space,
hyperbolic or not. 
Let us end with some questions, which aside from its theoretical interest, 
may play a role in the Quantization of the $A$-polynomial.

\begin{question}
\lbl{que.1}
Is the tropical curve $\Ga_K$ of a hyperbolic knot $K$ related to 
a triangulation of its $A$-polynomial curve?
\end{question}

To formulate our next question, recall that the tropical curve $\Ga_K$
is dual to a Newton subdivision of the 2-dimensional Newton polytope 
of the polynomial $A_K(M,L,q)$ with respect to the variables $L$ and $M$.
Assuming that $A_K(M,L,q)$ is good, and assuming the AJ Conjecture, it follows
that $\Ga_K$ is dual to the Newton polygon of the $A$-polynomial of $K$.
$\Ga_K$ is a balanced rational graph that consists or edges and rays, and the 
above assumptions imply that the slopes of the rays are negative inverses
of the slopes of the $A$-polynomial of $K$. Consequently, Culler-Shalen theory
(see \cite{CS})
implies that the slopes of the rays of $\Ga_K$ are negative inverses
of boundary slopes of $K$, appropriately normalized by a factor of $2$.

\begin{question}
\lbl{que.2}
What is the geometric meaning of the vertices of $\Ga_K$ (those are
points in $\BQ^2$) and of the slopes of the edges of $\Ga_K$?
\end{question}

\section{Computations of tropical curves of knots}
\lbl{sec.compute}

\subsection{The homogeneous tropical curve of the $4_1$ knot}
\lbl{sub.41}

The non-commutative $A$-polynomial $A_{4_1}(M,L,q)$ of $4_1$ was computed 
in \cite[Sec.6.2]{GL} and also \cite[Sec.3.2]{Ga2} 
using the {\tt WZ method} of \cite{WZ,Z} implemented by \cite{PR}
in {\tt Mathematica}. The non-commutative $A$-polynomial is given by

{\small
\begin{eqnarray*}
A_{4_1}(y,x,1/t) &=&
\frac{x^3 \left(t^2-y\right) \left(t^3-y\right) y^2 (t+y) \left(t-y^2\right) 
\left(t^3-y^2\right)}{t^{14}} \\
&+ & 
\frac{\left(t^2-y\right) (-1+y) y^2 \left(t^2+y\right) \left(t^3-y^2\right) 
\left(t^5-y^2\right)}{t^{15}} \\
&- & 
\frac{1}{t^{18}}x^2 \left(t^2-y\right)^2 \left(t^2+y\right) 
\left(t-y^2\right) \left(t^3-y^2\right) 
\left(t^8-2 t^6 y+t^7 y-t^3 y^2+t^4 y^2-t^5 y^2+t y^3-2 t^2 y^3+y^4\right) \\
&+ & 
\frac{1}{t^{17}}x (t-y) \left(t^2-y\right) (t+y) \left(t^3-y^2\right) 
\left(t^5-y^2\right) 
\left(t^4+y^4-t^3 y (2+y)-t y^2 (1+2 y)+t^2 y \left(1+y+y^2\right)\right)
\end{eqnarray*}
}
Notice that 
$$
A_{4_1}(x,y,1)=(-1 + x) (-1 + y)^4 (1 + y)^3 
(-x + x y + y^2 + 2 x y^2 + x^2 y^2 +    x y^3 - x y^4)
$$
confirms the AJ Conjecture, since the last factor is the geometric component
of the $A$-polynomial of $4_1$, the first term is the abelian component
of the $A$-polynomial, and the remaining second and third terms depend only
on $y=M$. Expanding out the terms, we obtain that:

      \begin{math}
 A_{4_1}(y,x,1/t)=         \tfrac{1}{t^{18}}\cdot x^{2}y^{11}+\tfrac{-1}{t^{14}}\cdot x^{3}y^{9}+\tfrac{1-3\cdot t}{t^{17}}\cdot x^{2}y^{10}+\tfrac{-1}{t^{17}}\cdot xy^{11}+\tfrac{-1+t+t^{2}}{t^{13}}\cdot x^{3}y^{8}+\tfrac{-1-3\cdot t^{2}+2\cdot t^{3}-t^{4}}{t^{17}}\cdot x^{2}y^{9}+\tfrac{2}{t^{16}}\cdot xy^{10}+\tfrac{1+2\cdot t^{2}+t^{3}-t^{4}}{t^{13}}\cdot x^{3}y^{7}+\tfrac{-1+3\cdot t-t^{2}+3\cdot t^{3}+2\cdot t^{5}}{t^{16}}\cdot x^{2}y^{8}+\tfrac{1+t^{3}+t^{4}}{t^{16}}\cdot xy^{9}+\tfrac{1-t-t^{3}-2\cdot t^{4}}{t^{12}}\cdot x^{3}y^{6}+\tfrac{3-2\cdot t+3\cdot t^{2}+t^{4}}{t^{14}}\cdot x^{2}y^{7}+\tfrac{-1-t-t^{2}-t^{3}-2\cdot t^{4}}{t^{15}}\cdot xy^{8}+\tfrac{1}{t^{15}}\cdot y^{9}+\tfrac{-2-t-t^{3}+t^{4}}{t^{10}}\cdot x^{3}y^{5}+\tfrac{1-3\cdot t-2\cdot t^{3}-t^{4}-2\cdot t^{6}}{t^{13}}\cdot x^{2}y^{6}+\tfrac{-1-t^{2}-t^{3}-t^{4}-t^{6}}{t^{14}}\cdot xy^{7}+\tfrac{-1}{t^{15}}\cdot y^{8}+\tfrac{-1+t+2\cdot t^{2}+t^{4}}{t^{9}}\cdot x^{3}y^{4}+\tfrac{-2-t^{2}-2\cdot t^{3}-3\cdot t^{5}+t^{6}}{t^{11}}\cdot x^{2}y^{5}+\tfrac{1+t^{2}+t^{3}+2\cdot t^{4}+t^{5}+t^{6}}{t^{13}}\cdot xy^{6}+\tfrac{-1-t-t^{2}}{t^{12}}\cdot y^{7}+\tfrac{1+t-t^{2}}{t^{7}}\cdot x^{3}y^{3}+\tfrac{1+3\cdot t^{2}-2\cdot t^{3}+3\cdot t^{4}}{t^{8}}\cdot x^{2}y^{4}+\tfrac{1+t+2\cdot t^{2}+t^{3}+t^{4}+t^{6}}{t^{11}}\cdot xy^{5}+\tfrac{1+t+t^{2}}{t^{12}}\cdot y^{6}+\tfrac{-1}{t^{4}}\cdot x^{3}y^{2}+\tfrac{2+3\cdot t^{2}-t^{3}+3\cdot t^{4}-t^{5}}{t^{7}}\cdot x^{2}y^{3}+\tfrac{-1-t^{2}-t^{3}-t^{4}-t^{6}}{t^{10}}\cdot xy^{4}+\tfrac{1+t+t^{2}}{t^{8}}\cdot y^{5}+\tfrac{-1+2\cdot t-3\cdot t^{2}-t^{4}}{t^{5}}\cdot x^{2}y^{2}+\tfrac{-2-t-t^{2}-t^{3}-t^{4}}{t^{7}}\cdot xy^{3}+\tfrac{-1-t-t^{2}}{t^{8}}\cdot y^{4}+\tfrac{-3+t}{t^{2}}\cdot x^{2}y+\tfrac{1+t+t^{4}}{t^{6}}\cdot xy^{2}+\tfrac{-1}{t^{3}}\cdot y^{3}+x^{2}+\tfrac{2}{t^{2}}\cdot xy+\tfrac{1}{t^{3}}\cdot y^{2}+\tfrac{-1}{t}\cdot x
      \end{math}

Inspection of the above formula shows that $A_{4_1}(y,x,1/t)$ is good.
Using the drawing {\tt polymake}
program of \cite{Ma} implemented in {\tt Singular} one can
compute the vertices of the tropical curve:

{\small
   \begin{displaymath}
      (3,-1/2),\;\;(-1,-1/3),\;\;(-3/4,-1/2),\;\;(-2,0),\;\;(2,-1),
\;\;(-1/2,-1),\;\;(1,-3/2),\;\;(0,-3/2),\;\;
   \end{displaymath}
   \begin{displaymath}
(-1/2,-5/4),\;\;(1/2,-7/4),\;\;(-1,-3/2),\;\;(1/2,-2),\;\;(2,-3),
\;\;(3/4,-5/2),\;\;(1,-8/3),\;\;(-2,-2),\;\;(-3,-5/2)
   \end{displaymath}
}

The tropical curve (with the convention that unmarked edges
or rays have multiplicity $1$) is:
\vspace*{0.5cm}

   \begin{center}
    \begin{texdraw}
       \drawdim cm  \relunitscale 0.5 \arrowheadtype t:V
       \linewd 0.1  \lpatt (1 0)

       \setgray 0.6
       \relunitscale 2
       \move (3 0.5) \fcir f:0 r:0.1
       \move (3 0.5) \lvec (-0.75 0.5)
       \htext (1.12 0){$2$}
       \move (3 0.5) \lvec (2 0)
       \move (3 0.5) \rlvec (2.5 1.25)
       \move (3 0.5) \rlvec (1.5 0)
       \htext (3.5 0.5){$2$}
       \move (-1 0.66) \fcir f:0 r:0.1
       \move (-1 0.66) \lvec (-0.75 0.5)
       \move (-1 0.66) \lvec (-2 1)
       \move (-1 0.66) \rlvec (0 1.5)
       \move (-0.75 0.5) \fcir f:0 r:0.1
       \move (-0.75 0.5) \lvec (-0.5 0)
       \move (-2 1) \fcir f:0 r:0.1
       \move (-2 1) \rlvec (-1.5 0)
       \move (-2 1) \rlvec (-2.5 1.25)
       \move (2 0) \fcir f:0 r:0.1
       \move (2 0) \lvec (-0.5 0)
       \move (2 0) \lvec (1 -0.5)
       \move (2 0) \rlvec (1.5 0)
       \move (-0.5 0) \fcir f:0 r:0.1
       \move (-0.5 0) \lvec (-0.5 -0.25)
       \htext (-0.5 -0.62){$2$}
       \move (1 -0.5) \fcir f:0 r:0.1
       \move (1 -0.5) \lvec (0 -0.5)
       \htext (0.5 -1){$2$}
       \move (1 -0.5) \lvec (0.5 -0.75)
       \move (1 -0.5) \rlvec (1.5 0)
       \htext (1.5 -0.5){$2$}
       \move (0 -0.5) \fcir f:0 r:0.1
       \move (0 -0.5) \lvec (-0.5 -0.25)
       \move (0 -0.5) \lvec (0.5 -0.75)
       \move (0 -0.5) \lvec (-1 -0.5)
       \htext (-0.5 -1){$2$}
       \move (-0.5 -0.25) \fcir f:0 r:0.1
       \move (-0.5 -0.25) \lvec (-1 -0.5)
       \move (0.5 -0.75) \fcir f:0 r:0.1
       \move (0.5 -0.75) \lvec (0.5 -1)
       \htext (0.5 -1.37){$2$}
       \move (-1 -0.5) \fcir f:0 r:0.1
       \move (-1 -0.5) \lvec (-2 -1)
       \move (-1 -0.5) \rlvec (-1.5 0)
       \htext (-1.5 -0.5){$2$}
       \move (0.5 -1) \fcir f:0 r:0.1
       \move (0.5 -1) \lvec (0.75 -1.5)
       \move (0.5 -1) \lvec (-2 -1)
       \htext (-0.75 -1.5){$2$}
       \move (0.5 -1) \rlvec (1.5 0)
       \move (2 -2) \fcir f:0 r:0.1
       \move (2 -2) \lvec (1 -1.66)
       \move (2 -2) \rlvec (1.5 0)
       \move (2 -2) \rlvec (2.5 -1.25)
       \move (0.75 -1.5) \fcir f:0 r:0.1
       \move (0.75 -1.5) \lvec (1 -1.66)
       \move (0.75 -1.5) \lvec (-3 -1.5)
       \htext (-1.12 -2){$2$}
       \move (1 -1.66) \fcir f:0 r:0.1
       \move (1 -1.66) \rlvec (0 -1.5)
       \move (-2 -1) \fcir f:0 r:0.1
       \move (-2 -1) \lvec (-3 -1.5)
       \move (-2 -1) \rlvec (-1.5 0)
       \htext (-2.5 -1){$2$}
       \move (-3 -1.5) \fcir f:0 r:0.1
       \move (-3 -1.5) \rlvec (-2.5 -1.25)
       \move (-3 -1.5) \rlvec (-1.5 0)
       \htext (-3.5 -1.5){$2$}

        \move (-4 -3) \fcir f:0.8 r:0.05
        \move (-4 -2) \fcir f:0.8 r:0.05
        \move (-4 -1) \fcir f:0.8 r:0.05
        \move (-4 0) \fcir f:0.8 r:0.05
        \move (-4 1) \fcir f:0.8 r:0.05
        \move (-4 2) \fcir f:0.8 r:0.05
        \move (-3 -3) \fcir f:0.8 r:0.05
        \move (-3 -2) \fcir f:0.8 r:0.05
        \move (-3 -1) \fcir f:0.8 r:0.05
        \move (-3 0) \fcir f:0.8 r:0.05
        \move (-3 1) \fcir f:0.8 r:0.05
        \move (-3 2) \fcir f:0.8 r:0.05
        \move (-2 -3) \fcir f:0.8 r:0.05
        \move (-2 -2) \fcir f:0.8 r:0.05
        \move (-2 -1) \fcir f:0.8 r:0.05
        \move (-2 0) \fcir f:0.8 r:0.05
        \move (-2 1) \fcir f:0.8 r:0.05
        \move (-2 2) \fcir f:0.8 r:0.05
        \move (-1 -3) \fcir f:0.8 r:0.05
        \move (-1 -2) \fcir f:0.8 r:0.05
        \move (-1 -1) \fcir f:0.8 r:0.05
        \move (-1 0) \fcir f:0.8 r:0.05
        \move (-1 1) \fcir f:0.8 r:0.05
        \move (-1 2) \fcir f:0.8 r:0.05
        \move (0 -3) \fcir f:0.8 r:0.05
        \move (0 -2) \fcir f:0.8 r:0.05
        \move (0 -1) \fcir f:0.8 r:0.05
        \move (0 0) \fcir f:0.8 r:0.05
        \move (0 1) \fcir f:0.8 r:0.05
        \move (0 2) \fcir f:0.8 r:0.05
        \move (1 -3) \fcir f:0.8 r:0.05
        \move (1 -2) \fcir f:0.8 r:0.05
        \move (1 -1) \fcir f:0.8 r:0.05
        \move (1 0) \fcir f:0.8 r:0.05
        \move (1 1) \fcir f:0.8 r:0.05
        \move (1 2) \fcir f:0.8 r:0.05
        \move (2 -3) \fcir f:0.8 r:0.05
        \move (2 -2) \fcir f:0.8 r:0.05
        \move (2 -1) \fcir f:0.8 r:0.05
        \move (2 0) \fcir f:0.8 r:0.05
        \move (2 1) \fcir f:0.8 r:0.05
        \move (2 2) \fcir f:0.8 r:0.05
        \move (3 -3) \fcir f:0.8 r:0.05
        \move (3 -2) \fcir f:0.8 r:0.05
        \move (3 -1) \fcir f:0.8 r:0.05
        \move (3 0) \fcir f:0.8 r:0.05
        \move (3 1) \fcir f:0.8 r:0.05
        \move (3 2) \fcir f:0.8 r:0.05
        \move (4 -3) \fcir f:0.8 r:0.05
        \move (4 -2) \fcir f:0.8 r:0.05
        \move (4 -1) \fcir f:0.8 r:0.05
        \move (4 0) \fcir f:0.8 r:0.05
        \move (4 1) \fcir f:0.8 r:0.05
        \move (4 2) \fcir f:0.8 r:0.05
                          
    \end{texdraw}\end{center}
The Newton subdivision of the Newton polygon is:

   \begin{center}
    \begin{texdraw}
       \drawdim cm  \relunitscale 0.5 
       \linewd 0.05
        \move (2 11)        
        \lvec (3 9)
        \move (3 9)        
        \lvec (3 2)
        \move (3 2)        
        \lvec (2 0)
        \move (2 0)        
        \lvec (1 0)
        \move (1 0)        
        \lvec (0 2)
        \move (0 2)        
        \lvec (0 9)
        \move (0 9)        
        \lvec (1 11)
        \move (1 11)        
        \lvec (2 11)

        \move (2 9)        
        \lvec (2 11)
        \move (3 7)        
        \lvec (2 9)
        \move (2 11)        
        \lvec (0 8)
        \move (0 8)        
        \lvec (1 11)
        \move (2 9)        
        \lvec (0 8)
        \move (2 8)        
        \lvec (2 9)
        \move (3 6)        
        \lvec (2 8)
        \move (2 8)        
        \lvec (0 8)
        \move (2 6)        
        \lvec (2 8)
        \move (3 4)        
        \lvec (2 6)
        \move (1 6)        
        \lvec (2 8)
        \move (2 6)        
        \lvec (1 4)
        \move (1 4)        
        \lvec (1 6)
        \move (1 6)        
        \lvec (0 8)
        \move (3 4)        
        \lvec (1 4)
        \move (1 4)        
        \lvec (0 6)
        \move (3 3)        
        \lvec (1 2)
        \move (1 2)        
        \lvec (1 4)
        \move (2 0)        
        \lvec (3 3)
        \move (3 3)        
        \lvec (1 0)
        \move (1 0)        
        \lvec (1 2)
        \move (1 2)        
        \lvec (0 4)
        \move (0 0) \fcir f:0.6 r:0.18
        \move (0 1) \fcir f:0.6 r:0.18
        \move (0 2) \fcir f:0.6 r:0.18
        \move (0 3) \fcir f:0.6 r:0.18
        \move (0 4) \fcir f:0.6 r:0.18
        \move (0 5) \fcir f:0.6 r:0.18
        \move (0 6) \fcir f:0.6 r:0.18
        \move (0 7) \fcir f:0.6 r:0.18
        \move (0 8) \fcir f:0.6 r:0.18
        \move (0 9) \fcir f:0.6 r:0.18
        \move (0 10) \fcir f:0.6 r:0.18
        \move (0 11) \fcir f:0.6 r:0.18
        \move (1 0) \fcir f:0.6 r:0.18
        \move (1 1) \fcir f:0.6 r:0.18
        \move (1 2) \fcir f:0.6 r:0.18
        \move (1 3) \fcir f:0.6 r:0.18
        \move (1 4) \fcir f:0.6 r:0.18
        \move (1 5) \fcir f:0.6 r:0.18
        \move (1 6) \fcir f:0.6 r:0.18
        \move (1 7) \fcir f:0.6 r:0.18
        \move (1 8) \fcir f:0.6 r:0.18
        \move (1 9) \fcir f:0.6 r:0.18
        \move (1 10) \fcir f:0.6 r:0.18
        \move (1 11) \fcir f:0.6 r:0.18
        \move (2 0) \fcir f:0.6 r:0.18
        \move (2 1) \fcir f:0.6 r:0.18
        \move (2 2) \fcir f:0.6 r:0.18
        \move (2 3) \fcir f:0.6 r:0.18
        \move (2 4) \fcir f:0.6 r:0.18
        \move (2 5) \fcir f:0.6 r:0.18
        \move (2 6) \fcir f:0.6 r:0.18
        \move (2 7) \fcir f:0.6 r:0.18
        \move (2 8) \fcir f:0.6 r:0.18
        \move (2 9) \fcir f:0.6 r:0.18
        \move (2 10) \fcir f:0.6 r:0.18
        \move (2 11) \fcir f:0.6 r:0.18
        \move (3 0) \fcir f:0.6 r:0.18
        \move (3 1) \fcir f:0.6 r:0.18
        \move (3 2) \fcir f:0.6 r:0.18
        \move (3 3) \fcir f:0.6 r:0.18
        \move (3 4) \fcir f:0.6 r:0.18
        \move (3 5) \fcir f:0.6 r:0.18
        \move (3 6) \fcir f:0.6 r:0.18
        \move (3 7) \fcir f:0.6 r:0.18
        \move (3 8) \fcir f:0.6 r:0.18
        \move (3 9) \fcir f:0.6 r:0.18
        \move (3 10) \fcir f:0.6 r:0.18
        \move (3 11) \fcir f:0.6 r:0.18
       \move (2 11) 
       \fcir f:0 r:0.22
       \move (3 9) 
       \fcir f:0 r:0.22
       \move (2 9) 
       \fcir f:0 r:0.22
       \move (3 7) 
       \fcir f:0 r:0.22
       \move (1 11) 
       \fcir f:0 r:0.22
       \move (0 8) 
       \fcir f:0 r:0.22
       \move (0 9) 
       \fcir f:0 r:0.22
       \move (2 8) 
       \fcir f:0 r:0.22
       \move (3 6) 
       \fcir f:0 r:0.22
       \move (2 6) 
       \fcir f:0 r:0.22
       \move (3 4) 
       \fcir f:0 r:0.22
       \move (1 6) 
       \fcir f:0 r:0.22
       \move (1 4) 
       \fcir f:0 r:0.22
       \move (0 6) 
       \fcir f:0 r:0.22
       \move (3 3) 
       \fcir f:0 r:0.22
       \move (1 2) 
       \fcir f:0 r:0.22
       \move (3 2) 
       \fcir f:0 r:0.22
       \move (2 0) 
       \fcir f:0 r:0.22
       \move (1 0) 
       \fcir f:0 r:0.22
       \move (0 4) 
       \fcir f:0 r:0.22
       \move (0 2) 
       \fcir f:0 r:0.22
   \end{texdraw}\end{center}
The reader may observe that the above Newton polygon is the Minkowski
sum of the Newton polygon of the $A$-polynomial of $4_1$ with 
a vertical segment.

\subsection{The non-homogeneous tropical curve of the $4_1$ knot}
\lbl{sub.41non}

The non-homogeneous $A$-polynomial of the $4_1$ knot was computed in 
Theorem 1 of \cite{GS} (with the notation $A_{-1}(E,Q,q)$ where $E=L$ and 
$Q=M$). It has $22$ terms and it is given by:
\begin{eqnarray*}
A^{nh}_{4_1}(M,L,q)&=&
L^2 M^2 q^2 \left(-1+M^2 q\right) \left(-1+M q^2\right) 
+(-1+M) M^2 q^2 \left(-1+M^2 q^3\right)
\\ & & 
-L (-1+M q)^2 (1+M q) \left(1-M q-M^2 q-M^2 q^3-M^3 q^3+M^4 q^4\right)
\\
B_{4_1}(M,L)&=& M q (1+M q) \left(-1+M^2 q\right) \left(-1+M^2 q^3\right)
\end{eqnarray*}
It follows that: 

      \begin{math}
A^{nh}_{4_1}(y,x,1/t)=\tfrac{-1}{t^{7}}\cdot xy^{7}+\tfrac{1}{t^{5}}\cdot x^{2}y^{5}+\tfrac{2}{t^{6}}\cdot xy^{6}+\tfrac{-1}{t^{3}}\cdot x^{2}y^{4}+\tfrac{1+t^{2}}{t^{6}}\cdot xy^{5}+\tfrac{-1}{t^{4}}\cdot x^{2}y^{3}+\tfrac{-1-t-t^{2}}{t^{5}}\cdot xy^{4}+\tfrac{1}{t^{5}}\cdot y^{5}+\tfrac{1}{t^{2}}\cdot x^{2}y^{2}+\tfrac{-1-t-t^{2}}{t^{4}}\cdot xy^{3}+\tfrac{-1}{t^{5}}\cdot y^{4}+\tfrac{1+t^{2}}{t^{3}}\cdot xy^{2}+\tfrac{-1}{t^{2}}\cdot y^{3}+\tfrac{2}{t}\cdot xy+\tfrac{1}{t^{2}}\cdot y^{2}-x
      \end{math}

\noindent
It is easy to see that the above polynomial is good. 
The vertices of the corresponding tropical curve are:

   \begin{displaymath}
      (1,-1/2),\;\;(-1/2,-1/2),\;\;(-2,0),\;\;(0,-1),\;\;(2,-2),\;\;(1/2,-3/2),\;\;(-1,-3/2)
   \end{displaymath}
The tropical curve is:
\vspace*{0.5cm}

\begin{center}
    \begin{texdraw}
       \drawdim cm  \relunitscale 0.5 \arrowheadtype t:V
       \linewd 0.1  \lpatt (1 0)

       \setgray 0.6
       \relunitscale 3
       \move (1 0.5) \fcir f:0 r:0.06
       \move (1 0.5) \lvec (-0.5 0.5)
       \htext (0.25 0){$2$}
       \move (1 0.5) \lvec (0 0)
       \move (1 0.5) \rlvec (2 1)
       \move (1 0.5) \rlvec (1 0)
       \htext (1.33 0.5){$2$}
       \move (-0.5 0.5) \fcir f:0 r:0.06
       \move (-0.5 0.5) \lvec (-2 1)
       \move (-0.5 0.5) \lvec (0 0)
       \move (-2 1) \fcir f:0 r:0.06
       \move (-2 1) \rlvec (-1 0)
       \move (-2 1) \rlvec (-2 1)
       \move (0 0) \fcir f:0 r:0.06
       \move (0 0) \lvec (0.5 -0.5)
       \move (0 0) \lvec (-1 -0.5)
       \move (2 -1) \fcir f:0 r:0.06
       \move (2 -1) \lvec (0.5 -0.5)
       \move (2 -1) \rlvec (1 0)
       \move (2 -1) \rlvec (2 -1)
       \move (0.5 -0.5) \fcir f:0 r:0.06
       \move (0.5 -0.5) \lvec (-1 -0.5)
       \htext (-0.25 -1){$2$}
       \move (-1 -0.5) \fcir f:0 r:0.06
       \move (-1 -0.5) \rlvec (-2 -1)
       \move (-1 -0.5) \rlvec (-1 0)
       \htext (-1.33 -0.5){$2$}

        \move (-3 -2) \fcir f:0.8 r:0.03
        \move (-3 -1) \fcir f:0.8 r:0.03
        \move (-3 0) \fcir f:0.8 r:0.03
        \move (-3 1) \fcir f:0.8 r:0.03
        \move (-3 2) \fcir f:0.8 r:0.03
        \move (-2 -2) \fcir f:0.8 r:0.03
        \move (-2 -1) \fcir f:0.8 r:0.03
        \move (-2 0) \fcir f:0.8 r:0.03
        \move (-2 1) \fcir f:0.8 r:0.03
        \move (-2 2) \fcir f:0.8 r:0.03
        \move (-1 -2) \fcir f:0.8 r:0.03
        \move (-1 -1) \fcir f:0.8 r:0.03
        \move (-1 0) \fcir f:0.8 r:0.03
        \move (-1 1) \fcir f:0.8 r:0.03
        \move (-1 2) \fcir f:0.8 r:0.03
        \move (0 -2) \fcir f:0.8 r:0.03
        \move (0 -1) \fcir f:0.8 r:0.03
        \move (0 0) \fcir f:0.8 r:0.03
        \move (0 1) \fcir f:0.8 r:0.03
        \move (0 2) \fcir f:0.8 r:0.03
        \move (1 -2) \fcir f:0.8 r:0.03
        \move (1 -1) \fcir f:0.8 r:0.03
        \move (1 0) \fcir f:0.8 r:0.03
        \move (1 1) \fcir f:0.8 r:0.03
        \move (1 2) \fcir f:0.8 r:0.03
        \move (2 -2) \fcir f:0.8 r:0.03
        \move (2 -1) \fcir f:0.8 r:0.03
        \move (2 0) \fcir f:0.8 r:0.03
        \move (2 1) \fcir f:0.8 r:0.03
        \move (2 2) \fcir f:0.8 r:0.03
        \move (3 -2) \fcir f:0.8 r:0.03
        \move (3 -1) \fcir f:0.8 r:0.03
        \move (3 0) \fcir f:0.8 r:0.03
        \move (3 1) \fcir f:0.8 r:0.03
        \move (3 2) \fcir f:0.8 r:0.03
                          
    \end{texdraw}\end{center}
\noindent
The Newton subdivision of the Newton polygon is:

   \begin{center}
    \begin{texdraw}
       \drawdim cm  \relunitscale 0.5 
       \linewd 0.05
        \move (1 7)        
        \lvec (2 5)
        \move (2 5)        
        \lvec (2 2)
        \move (2 2)        
        \lvec (1 0)
        \move (1 0)        
        \lvec (0 2)
        \move (0 2)        
        \lvec (0 5)
        \move (0 5)        
        \lvec (1 7)

        \move (1 5)        
        \lvec (1 7)
        \move (2 3)        
        \lvec (1 5)
        \move (0 4)        
        \lvec (1 7)
        \move (1 5)        
        \lvec (0 4)
        \move (2 3)        
        \lvec (1 2)
        \move (1 2)        
        \lvec (0 4)
        \move (1 0)        
        \lvec (2 3)
        \move (1 0)        
        \lvec (1 2)
        \move (0 0) \fcir f:0.6 r:0.11
        \move (0 1) \fcir f:0.6 r:0.11
        \move (0 2) \fcir f:0.6 r:0.11
        \move (0 3) \fcir f:0.6 r:0.11
        \move (0 4) \fcir f:0.6 r:0.11
        \move (0 5) \fcir f:0.6 r:0.11
        \move (0 6) \fcir f:0.6 r:0.11
        \move (0 7) \fcir f:0.6 r:0.11
        \move (1 0) \fcir f:0.6 r:0.11
        \move (1 1) \fcir f:0.6 r:0.11
        \move (1 2) \fcir f:0.6 r:0.11
        \move (1 3) \fcir f:0.6 r:0.11
        \move (1 4) \fcir f:0.6 r:0.11
        \move (1 5) \fcir f:0.6 r:0.11
        \move (1 6) \fcir f:0.6 r:0.11
        \move (1 7) \fcir f:0.6 r:0.11
        \move (2 0) \fcir f:0.6 r:0.11
        \move (2 1) \fcir f:0.6 r:0.11
        \move (2 2) \fcir f:0.6 r:0.11
        \move (2 3) \fcir f:0.6 r:0.11
        \move (2 4) \fcir f:0.6 r:0.11
        \move (2 5) \fcir f:0.6 r:0.11
        \move (2 6) \fcir f:0.6 r:0.11
        \move (2 7) \fcir f:0.6 r:0.11
       \move (1 7) 
       \fcir f:0 r:0.14
       \move (2 5) 
       \fcir f:0 r:0.14
       \move (1 5) 
       \fcir f:0 r:0.14
       \move (2 3) 
       \fcir f:0 r:0.14
       \move (0 4) 
       \fcir f:0 r:0.14
       \move (0 5) 
       \fcir f:0 r:0.14
       \move (1 4) 
       \fcir f:0 r:0.14
       \move (1 3) 
       \fcir f:0 r:0.14
       \move (1 2) 
       \fcir f:0 r:0.14
       \move (2 2) 
       \fcir f:0 r:0.14
       \move (1 0) 
       \fcir f:0 r:0.14
       \move (0 2) 
       \fcir f:0 r:0.14
   \end{texdraw}
   \end{center}

\noindent
This example exhibits that the non-homogeneous tropical curve is much simpler
than the homogeneous one.

\subsection{The non-homogeneous tropical curve of the $5_2$ knot}
\lbl{sub.52non}

The non-homogeneous non-commutative $A$-polynomial $A^{nh}_{5_2}(M,L,q)$ has
$98$ terms, and it is given by \cite{GS} (with the notation $A^{nh}_2(E,Q,q)$
where $E=L$, $Q=M$):

      \begin{math}
A^{nh}_{5_2}(y,x,1/t)          =\tfrac{-1}{t^{19}}\cdot xy^{12}+\tfrac{1}{t^{17}}\cdot x^{2}y^{10}+\tfrac{3}{t^{18}}\cdot xy^{11}+\tfrac{1}{t^{18}}\cdot y^{12}+\tfrac{-2}{t^{15}}\cdot x^{2}y^{9}+\tfrac{1-t+t^{3}+t^{4}}{t^{18}}\cdot xy^{10}+\tfrac{-1}{t^{18}}\cdot y^{11}+\tfrac{-1-t+t^{2}+t^{3}-t^{4}}{t^{16}}\cdot x^{2}y^{8}+\tfrac{-2-2\cdot t+t^{2}-2\cdot t^{3}-3\cdot t^{4}}{t^{17}}\cdot xy^{9}+\tfrac{-1-t}{t^{14}}\cdot y^{10}+\tfrac{2+2\cdot t-t^{2}+t^{3}+2\cdot t^{4}}{t^{14}}\cdot x^{2}y^{7}+\tfrac{1-2\cdot t-t^{2}+t^{3}-t^{5}}{t^{15}}\cdot xy^{8}+\tfrac{1+t}{t^{14}}\cdot y^{9}+\tfrac{1}{t^{6}}\cdot x^{3}y^{5}+\tfrac{1-t-t^{2}+t^{3}+t^{4}-2\cdot t^{5}}{t^{14}}\cdot x^{2}y^{6}+\tfrac{2-t+t^{2}+4\cdot t^{3}+2\cdot t^{4}-t^{5}+2\cdot t^{6}}{t^{15}}\cdot xy^{7}+\tfrac{1}{t^{9}}\cdot y^{8}+\tfrac{-1}{t^{3}}\cdot x^{3}y^{4}+\tfrac{-2+t-t^{2}-4\cdot t^{3}-2\cdot t^{4}+t^{5}-2\cdot t^{6}}{t^{12}}\cdot x^{2}y^{5}+\tfrac{-1+t+t^{2}-t^{3}-t^{4}+2\cdot t^{5}}{t^{14}}\cdot xy^{6}+\tfrac{-1}{t^{9}}\cdot y^{7}+\tfrac{-1-t}{t^{5}}\cdot x^{3}y^{3}+\tfrac{-1+2\cdot t+t^{2}-t^{3}+t^{5}}{t^{9}}\cdot x^{2}y^{4}+\tfrac{-2-2\cdot t+t^{2}-t^{3}-2\cdot t^{4}}{t^{11}}\cdot xy^{5}+\tfrac{1+t}{t^{2}}\cdot x^{3}y^{2}+\tfrac{2+2\cdot t-t^{2}+2\cdot t^{3}+3\cdot t^{4}}{t^{8}}\cdot x^{2}y^{3}+\tfrac{1+t-t^{2}-t^{3}+t^{4}}{t^{10}}\cdot xy^{4}+\tfrac{1}{t^{3}}\cdot x^{3}y+\tfrac{-1+t-t^{3}-t^{4}}{t^{6}}\cdot x^{2}y^{2}+\tfrac{2}{t^{6}}\cdot xy^{3}-x^{3}+\tfrac{-3}{t^{3}}\cdot x^{2}y+\tfrac{-1}{t^{5}}\cdot xy^{2}+\tfrac{1}{t}\cdot x^{2}
      \end{math}

\noindent
The vertices of the tropical curve are:
   \begin{displaymath}
      (1,-1/2),\;\;(-1,0),\;\;(-1/2,-1/2),\;\;(17/2,-1/2),\;\;(-1,-1),\;\;(0,-1),\;\;(-6,-2),\;\;
    \end{displaymath}
    \begin{displaymath}
(6,-1),\;\;(-17/2,-5/2),\;\;(0,-2),\;\;(1,-2),\;\;(-1,-5/2),\;\;(1/2,-5/2),\;\;(1,-3)
   \end{displaymath}
   \vspace*{0.5cm}

\noindent
   \vspace*{0.5cm}
   The Newton subdivision of the tropical curve is:
   \vspace*{0.5cm}

   \begin{center}
            
    \begin{texdraw}
       \drawdim cm  \relunitscale 0.5 
       \linewd 0.05
        \move (1 12)        
        \lvec (2 10)
        \move (2 10)        
        \lvec (3 5)
        \move (3 5)        
        \lvec (3 0)
        \move (3 0)        
        \lvec (2 0)
        \move (2 0)        
        \lvec (1 2)
        \move (1 2)        
        \lvec (0 7)
        \move (0 7)        
        \lvec (0 12)
        \move (0 12)        
        \lvec (1 12)

        \move (1 10)        
        \lvec (1 12)
        \move (2 10)        
        \lvec (2 8)
        \move (2 8)        
        \lvec (1 10)
        \move (1 12)        
        \lvec (0 11)
        \move (1 10)        
        \lvec (0 11)
        \move (3 3)        
        \lvec (2 8)
        \move (1 7)        
        \lvec (1 6)
        \move (1 9)        
        \lvec (1 7)
        \move (1 10)        
        \lvec (1 9)
        \move (1 6)        
        \lvec (0 11)
        \move (2 8)        
        \lvec (2 6)
        \move (2 6)        
        \lvec (1 6)
        \move (1 4)        
        \lvec (0 9)
        \move (1 6)        
        \lvec (1 4)
        \move (3 1)        
        \lvec (2 6)
        \move (1 4)        
        \lvec (1 2)
        \move (2 3)        
        \lvec (2 2)
        \move (2 5)        
        \lvec (2 3)
        \move (2 6)        
        \lvec (2 5)
        \move (2 2)        
        \lvec (1 4)
        \move (3 1)        
        \lvec (2 2)
        \move (2 2)        
        \lvec (2 0)
        \move (3 1)        
        \lvec (2 0)
        \move (0 0) \fcir f:0.6 r:0.2
        \move (0 1) \fcir f:0.6 r:0.2
        \move (0 2) \fcir f:0.6 r:0.2
        \move (0 3) \fcir f:0.6 r:0.2
        \move (0 4) \fcir f:0.6 r:0.2
        \move (0 5) \fcir f:0.6 r:0.2
        \move (0 6) \fcir f:0.6 r:0.2
        \move (0 7) \fcir f:0.6 r:0.2
        \move (0 8) \fcir f:0.6 r:0.2
        \move (0 9) \fcir f:0.6 r:0.2
        \move (0 10) \fcir f:0.6 r:0.2
        \move (0 11) \fcir f:0.6 r:0.2
        \move (0 12) \fcir f:0.6 r:0.2
        \move (1 0) \fcir f:0.6 r:0.2
        \move (1 1) \fcir f:0.6 r:0.2
        \move (1 2) \fcir f:0.6 r:0.2
        \move (1 3) \fcir f:0.6 r:0.2
        \move (1 4) \fcir f:0.6 r:0.2
        \move (1 5) \fcir f:0.6 r:0.2
        \move (1 6) \fcir f:0.6 r:0.2
        \move (1 7) \fcir f:0.6 r:0.2
        \move (1 8) \fcir f:0.6 r:0.2
        \move (1 9) \fcir f:0.6 r:0.2
        \move (1 10) \fcir f:0.6 r:0.2
        \move (1 11) \fcir f:0.6 r:0.2
        \move (1 12) \fcir f:0.6 r:0.2
        \move (2 0) \fcir f:0.6 r:0.2
        \move (2 1) \fcir f:0.6 r:0.2
        \move (2 2) \fcir f:0.6 r:0.2
        \move (2 3) \fcir f:0.6 r:0.2
        \move (2 4) \fcir f:0.6 r:0.2
        \move (2 5) \fcir f:0.6 r:0.2
        \move (2 6) \fcir f:0.6 r:0.2
        \move (2 7) \fcir f:0.6 r:0.2
        \move (2 8) \fcir f:0.6 r:0.2
        \move (2 9) \fcir f:0.6 r:0.2
        \move (2 10) \fcir f:0.6 r:0.2
        \move (2 11) \fcir f:0.6 r:0.2
        \move (2 12) \fcir f:0.6 r:0.2
        \move (3 0) \fcir f:0.6 r:0.2
        \move (3 1) \fcir f:0.6 r:0.2
        \move (3 2) \fcir f:0.6 r:0.2
        \move (3 3) \fcir f:0.6 r:0.2
        \move (3 4) \fcir f:0.6 r:0.2
        \move (3 5) \fcir f:0.6 r:0.2
        \move (3 6) \fcir f:0.6 r:0.2
        \move (3 7) \fcir f:0.6 r:0.2
        \move (3 8) \fcir f:0.6 r:0.2
        \move (3 9) \fcir f:0.6 r:0.2
        \move (3 10) \fcir f:0.6 r:0.2
        \move (3 11) \fcir f:0.6 r:0.2
        \move (3 12) \fcir f:0.6 r:0.2
       \move (1 12) 
       \fcir f:0 r:0.25
       \move (2 10) 
       \fcir f:0 r:0.25
       \move (1 10) 
       \fcir f:0 r:0.25
       \move (2 8) 
       \fcir f:0 r:0.25
       \move (0 12) 
       \fcir f:0 r:0.25
       \move (0 11) 
       \fcir f:0 r:0.25
       \move (3 5) 
       \fcir f:0 r:0.25
       \move (3 3) 
       \fcir f:0 r:0.25
       \move (1 9) 
       \fcir f:0 r:0.25
       \move (1 7) 
       \fcir f:0 r:0.25
       \move (1 6) 
       \fcir f:0 r:0.25
       \move (2 6) 
       \fcir f:0 r:0.25
       \move (0 9) 
       \fcir f:0 r:0.25
       \move (1 4) 
       \fcir f:0 r:0.25
       \move (3 1) 
       \fcir f:0 r:0.25
       \move (0 7) 
       \fcir f:0 r:0.25
       \move (1 2) 
       \fcir f:0 r:0.25
       \move (2 5) 
       \fcir f:0 r:0.25
       \move (2 3) 
       \fcir f:0 r:0.25
       \move (2 2) 
       \fcir f:0 r:0.25
       \move (2 0) 
       \fcir f:0 r:0.25
       \move (3 0) 
       \fcir f:0 r:0.25
   \end{texdraw}
   \end{center}

\noindent
The tropical curve is:
\vspace*{0.5cm}
   \begin{center}

    \begin{texdraw}
       \drawdim cm  \relunitscale 0.7 \arrowheadtype t:V
       \linewd 0.1  \lpatt (1 0)

       \setgray 0.6
       \relunitscale 0.70
       \move (1 0.5) \fcir f:0 r:0.28
       \move (1 0.5) \lvec (-0.5 0.5)
       \htext (0.25 0){$2$}
       \move (1 0.5) \lvec (8.5 0.5)
       \htext (4.75 0){$2$}
       \move (1 0.5) \lvec (0 0)
       \move (1 0.5) \rlvec (2.5 1.25)
       \move (-1 1) \fcir f:0 r:0.28
       \move (-1 1) \lvec (-0.5 0.5)
       \move (-1 1) \rlvec (-2.5 0)
       \move (-1 1) \rlvec (0 2.5)
       \move (-0.5 0.5) \fcir f:0 r:0.28
       \move (-0.5 0.5) \lvec (-1 0)
       \move (8.5 0.5) \fcir f:0 r:0.28
       \move (8.5 0.5) \lvec (6 0)
       \move (8.5 0.5) \rlvec (2.5 0.5)
       \move (8.5 0.5) \rlvec (2.5 0)
       \htext (9.91 0.5){$2$}
       \move (-1 0) \fcir f:0 r:0.28
       \move (-1 0) \lvec (0 0)
       \htext (-0.5 -0.5){$4$}
       \move (-1 0) \lvec (-6 -1)
       \move (0 0) \fcir f:0 r:0.28
       \move (0 0) \lvec (6 0)
       \htext (3 -0.5){$2$}
       \move (0 0) \lvec (0 -1)
       \move (-6 -1) \fcir f:0 r:0.28
       \move (-6 -1) \lvec (-8.5 -1.5)
       \move (-6 -1) \lvec (0 -1)
       \htext (-3 -1.5){$2$}
       \move (-6 -1) \rlvec (-2.5 0)
       \htext (-7.41 -1){$2$}
       \move (6 0) \fcir f:0 r:0.28
       \move (6 0) \lvec (1 -1)
       \move (6 0) \rlvec (2.5 0)
       \htext (7.41 0){$2$}
       \move (-8.5 -1.5) \fcir f:0 r:0.28
       \move (-8.5 -1.5) \lvec (-1 -1.5)
       \htext (-4.75 -2){$2$}
       \move (-8.5 -1.5) \rlvec (-2.5 -0.5)
       \move (-8.5 -1.5) \rlvec (-2.5 0)
       \htext (-9.91 -1.5){$2$}
       \move (0 -1) \fcir f:0 r:0.28
       \move (0 -1) \lvec (1 -1)
       \htext (0.5 -1.5){$4$}
       \move (0 -1) \lvec (-1 -1.5)
       \move (1 -1) \fcir f:0 r:0.28
       \move (1 -1) \lvec (0.5 -1.5)
       \move (-1 -1.5) \fcir f:0 r:0.28
       \move (-1 -1.5) \lvec (0.5 -1.5)
       \htext (-0.25 -2){$2$}
       \move (-1 -1.5) \rlvec (-2.5 -1.25)
       \move (0.5 -1.5) \fcir f:0 r:0.28
       \move (0.5 -1.5) \lvec (1 -2)
       \move (1 -2) \fcir f:0 r:0.28
       \move (1 -2) \rlvec (2.5 0)
       \move (1 -2) \rlvec (0 -2.5)

        \move (-9 -3) \fcir f:0.8 r:0.14
        \move (-9 -2) \fcir f:0.8 r:0.14
        \move (-9 -1) \fcir f:0.8 r:0.14
        \move (-9 0) \fcir f:0.8 r:0.14
        \move (-9 1) \fcir f:0.8 r:0.14
        \move (-9 2) \fcir f:0.8 r:0.14
        \move (-8 -3) \fcir f:0.8 r:0.14
        \move (-8 -2) \fcir f:0.8 r:0.14
        \move (-8 -1) \fcir f:0.8 r:0.14
        \move (-8 0) \fcir f:0.8 r:0.14
        \move (-8 1) \fcir f:0.8 r:0.14
        \move (-8 2) \fcir f:0.8 r:0.14
        \move (-7 -3) \fcir f:0.8 r:0.14
        \move (-7 -2) \fcir f:0.8 r:0.14
        \move (-7 -1) \fcir f:0.8 r:0.14
        \move (-7 0) \fcir f:0.8 r:0.14
        \move (-7 1) \fcir f:0.8 r:0.14
        \move (-7 2) \fcir f:0.8 r:0.14
        \move (-6 -3) \fcir f:0.8 r:0.14
        \move (-6 -2) \fcir f:0.8 r:0.14
        \move (-6 -1) \fcir f:0.8 r:0.14
        \move (-6 0) \fcir f:0.8 r:0.14
        \move (-6 1) \fcir f:0.8 r:0.14
        \move (-6 2) \fcir f:0.8 r:0.14
        \move (-5 -3) \fcir f:0.8 r:0.14
        \move (-5 -2) \fcir f:0.8 r:0.14
        \move (-5 -1) \fcir f:0.8 r:0.14
        \move (-5 0) \fcir f:0.8 r:0.14
        \move (-5 1) \fcir f:0.8 r:0.14
        \move (-5 2) \fcir f:0.8 r:0.14
        \move (-4 -3) \fcir f:0.8 r:0.14
        \move (-4 -2) \fcir f:0.8 r:0.14
        \move (-4 -1) \fcir f:0.8 r:0.14
        \move (-4 0) \fcir f:0.8 r:0.14
        \move (-4 1) \fcir f:0.8 r:0.14
        \move (-4 2) \fcir f:0.8 r:0.14
        \move (-3 -3) \fcir f:0.8 r:0.14
        \move (-3 -2) \fcir f:0.8 r:0.14
        \move (-3 -1) \fcir f:0.8 r:0.14
        \move (-3 0) \fcir f:0.8 r:0.14
        \move (-3 1) \fcir f:0.8 r:0.14
        \move (-3 2) \fcir f:0.8 r:0.14
        \move (-2 -3) \fcir f:0.8 r:0.14
        \move (-2 -2) \fcir f:0.8 r:0.14
        \move (-2 -1) \fcir f:0.8 r:0.14
        \move (-2 0) \fcir f:0.8 r:0.14
        \move (-2 1) \fcir f:0.8 r:0.14
        \move (-2 2) \fcir f:0.8 r:0.14
        \move (-1 -3) \fcir f:0.8 r:0.14
        \move (-1 -2) \fcir f:0.8 r:0.14
        \move (-1 -1) \fcir f:0.8 r:0.14
        \move (-1 0) \fcir f:0.8 r:0.14
        \move (-1 1) \fcir f:0.8 r:0.14
        \move (-1 2) \fcir f:0.8 r:0.14
        \move (0 -3) \fcir f:0.8 r:0.14
        \move (0 -2) \fcir f:0.8 r:0.14
        \move (0 -1) \fcir f:0.8 r:0.14
        \move (0 0) \fcir f:0.8 r:0.14
        \move (0 1) \fcir f:0.8 r:0.14
        \move (0 2) \fcir f:0.8 r:0.14
        \move (1 -3) \fcir f:0.8 r:0.14
        \move (1 -2) \fcir f:0.8 r:0.14
        \move (1 -1) \fcir f:0.8 r:0.14
        \move (1 0) \fcir f:0.8 r:0.14
        \move (1 1) \fcir f:0.8 r:0.14
        \move (1 2) \fcir f:0.8 r:0.14
        \move (2 -3) \fcir f:0.8 r:0.14
        \move (2 -2) \fcir f:0.8 r:0.14
        \move (2 -1) \fcir f:0.8 r:0.14
        \move (2 0) \fcir f:0.8 r:0.14
        \move (2 1) \fcir f:0.8 r:0.14
        \move (2 2) \fcir f:0.8 r:0.14
        \move (3 -3) \fcir f:0.8 r:0.14
        \move (3 -2) \fcir f:0.8 r:0.14
        \move (3 -1) \fcir f:0.8 r:0.14
        \move (3 0) \fcir f:0.8 r:0.14
        \move (3 1) \fcir f:0.8 r:0.14
        \move (3 2) \fcir f:0.8 r:0.14
        \move (4 -3) \fcir f:0.8 r:0.14
        \move (4 -2) \fcir f:0.8 r:0.14
        \move (4 -1) \fcir f:0.8 r:0.14
        \move (4 0) \fcir f:0.8 r:0.14
        \move (4 1) \fcir f:0.8 r:0.14
        \move (4 2) \fcir f:0.8 r:0.14
        \move (5 -3) \fcir f:0.8 r:0.14
        \move (5 -2) \fcir f:0.8 r:0.14
        \move (5 -1) \fcir f:0.8 r:0.14
        \move (5 0) \fcir f:0.8 r:0.14
        \move (5 1) \fcir f:0.8 r:0.14
        \move (5 2) \fcir f:0.8 r:0.14
        \move (6 -3) \fcir f:0.8 r:0.14
        \move (6 -2) \fcir f:0.8 r:0.14
        \move (6 -1) \fcir f:0.8 r:0.14
        \move (6 0) \fcir f:0.8 r:0.14
        \move (6 1) \fcir f:0.8 r:0.14
        \move (6 2) \fcir f:0.8 r:0.14
        \move (7 -3) \fcir f:0.8 r:0.14
        \move (7 -2) \fcir f:0.8 r:0.14
        \move (7 -1) \fcir f:0.8 r:0.14
        \move (7 0) \fcir f:0.8 r:0.14
        \move (7 1) \fcir f:0.8 r:0.14
        \move (7 2) \fcir f:0.8 r:0.14
        \move (8 -3) \fcir f:0.8 r:0.14
        \move (8 -2) \fcir f:0.8 r:0.14
        \move (8 -1) \fcir f:0.8 r:0.14
        \move (8 0) \fcir f:0.8 r:0.14
        \move (8 1) \fcir f:0.8 r:0.14
        \move (8 2) \fcir f:0.8 r:0.14
        \move (9 -3) \fcir f:0.8 r:0.14
        \move (9 -2) \fcir f:0.8 r:0.14
        \move (9 -1) \fcir f:0.8 r:0.14
        \move (9 0) \fcir f:0.8 r:0.14
        \move (9 1) \fcir f:0.8 r:0.14
        \move (9 2) \fcir f:0.8 r:0.14
                          
    \end{texdraw}\end{center}

\subsection{The non-homogeneous tropical curve of the $6_1$ knot}
\lbl{sub.61non}

The non-homogeneous non-commutative $A$-polynomial $A^{nh}_{6_1}(M,L,q)$ has
$346$ terms, and it is given by \cite{GS} (with the notation $A^{nh}_{-2}(E,Q,q)$
where $E=L$, $Q=M$):

\begin{center} 
     \begin{math}
A^{nh}_{6_1}(y,x,1/t)=\tfrac{1}{t^{31}}\cdot x^{2}y^{15}+\tfrac{-1-t}{t^{28}}\cdot x^{3}y^{13}+\tfrac{-1-3\cdot t}{t^{30}}\cdot x^{2}y^{14}+\tfrac{-1}{t^{30}}\cdot xy^{15}+\tfrac{1}{t^{22}}\cdot x^{4}y^{11}+\tfrac{1+3\cdot t+t^{2}}{t^{26}}\cdot x^{3}y^{12}+\tfrac{-1-t+2\cdot t^{2}+2\cdot t^{3}-t^{4}-t^{5}-t^{6}}{t^{30}}\cdot x^{2}y^{13}+\tfrac{2}{t^{29}}\cdot xy^{14}+\tfrac{-1}{t^{18}}\cdot x^{4}y^{10}+\tfrac{1+2\cdot t+2\cdot t^{2}+t^{3}-t^{4}-2\cdot t^{5}+2\cdot t^{6}+t^{7}}{t^{27}}\cdot x^{3}y^{11}+\tfrac{1+4\cdot t+3\cdot t^{2}-t^{3}+2\cdot t^{5}+4\cdot t^{6}+3\cdot t^{7}}{t^{29}}\cdot x^{2}y^{12}+\tfrac{1-t-t^{2}+t^{4}+t^{5}+t^{6}}{t^{29}}\cdot xy^{13}+\tfrac{-1-t-t^{2}}{t^{21}}\cdot x^{4}y^{9}+\tfrac{-1-4\cdot t-4\cdot t^{2}-3\cdot t^{3}-2\cdot t^{6}-3\cdot t^{7}-t^{8}}{t^{25}}\cdot x^{3}y^{10}+\tfrac{1-2\cdot t-3\cdot t^{2}+3\cdot t^{4}+3\cdot t^{5}+t^{6}-4\cdot t^{7}-t^{8}+t^{9}+t^{10}}{t^{28}}\cdot x^{2}y^{11}+\tfrac{-2+2\cdot t^{2}-2\cdot t^{4}-2\cdot t^{5}-2\cdot t^{6}}{t^{28}}\cdot xy^{12}+\tfrac{1+t+t^{2}}{t^{17}}\cdot x^{4}y^{8}+\tfrac{-1-2\cdot t-2\cdot t^{2}+3\cdot t^{4}-2\cdot t^{6}-4\cdot t^{7}-t^{8}+3\cdot t^{9}+t^{10}-t^{11}}{t^{25}}\cdot x^{3}y^{9}+\tfrac{-1-3\cdot t+t^{2}-3\cdot t^{4}-6\cdot t^{5}-6\cdot t^{6}-4\cdot t^{7}+t^{8}-t^{9}-2\cdot t^{10}-3\cdot t^{11}}{t^{27}}\cdot x^{2}y^{10}+\tfrac{1+2\cdot t-t^{3}+t^{5}+2\cdot t^{6}+t^{7}-t^{8}-t^{9}-t^{10}}{t^{27}}\cdot xy^{11}+\tfrac{1+t+t^{2}}{t^{19}}\cdot x^{4}y^{7}+\tfrac{1+3\cdot t+3\cdot t^{2}+2\cdot t^{3}+2\cdot t^{5}+5\cdot t^{6}+5\cdot t^{7}+3\cdot t^{8}-t^{10}+t^{11}+t^{12}}{t^{23}}\cdot x^{3}y^{8}+\tfrac{1+t-2\cdot t^{2}-2\cdot t^{3}-2\cdot t^{4}+2\cdot t^{5}-t^{7}-3\cdot t^{8}-3\cdot t^{9}+2\cdot t^{11}-t^{12}}{t^{25}}\cdot x^{2}y^{9}+\tfrac{1-3\cdot t-t^{2}+3\cdot t^{3}+3\cdot t^{4}+t^{5}-2\cdot t^{6}-2\cdot t^{7}+2\cdot t^{8}+2\cdot t^{9}+2\cdot t^{10}}{t^{26}}\cdot xy^{10}+\tfrac{1}{t^{26}}\cdot y^{11}+\tfrac{-1-t-t^{2}}{t^{15}}\cdot x^{4}y^{6}+\tfrac{1+t-t^{2}-2\cdot t^{3}+t^{4}+3\cdot t^{5}+4\cdot t^{6}-2\cdot t^{7}-4\cdot t^{8}-2\cdot t^{9}+t^{10}+2\cdot t^{11}-2\cdot t^{13}}{t^{22}}\cdot x^{3}y^{7}+\tfrac{1+2\cdot t+2\cdot t^{2}+3\cdot t^{3}+t^{4}+3\cdot t^{5}+3\cdot t^{6}+3\cdot t^{7}+3\cdot t^{8}+t^{9}+t^{11}}{t^{22}}\cdot x^{2}y^{8}+\tfrac{-2-2\cdot t^{3}-4\cdot t^{4}-4\cdot t^{5}-2\cdot t^{6}+t^{7}-t^{9}-2\cdot t^{10}-t^{11}+t^{13}}{t^{25}}\cdot xy^{9}+\tfrac{-1}{t^{26}}\cdot y^{10}+\tfrac{-1}{t^{16}}\cdot x^{4}y^{5}+\tfrac{-2-2\cdot t^{3}-4\cdot t^{4}-4\cdot t^{5}-2\cdot t^{6}+t^{7}-t^{9}-2\cdot t^{10}-t^{11}+t^{13}}{t^{19}}\cdot x^{3}y^{6}+\tfrac{1+2\cdot t+2\cdot t^{2}+3\cdot t^{3}+t^{4}+3\cdot t^{5}+3\cdot t^{6}+3\cdot t^{7}+3\cdot t^{8}+t^{9}+t^{11}}{t^{20}}\cdot x^{2}y^{7}+\tfrac{1+t-t^{2}-2\cdot t^{3}+t^{4}+3\cdot t^{5}+4\cdot t^{6}-2\cdot t^{7}-4\cdot t^{8}-2\cdot t^{9}+t^{10}+2\cdot t^{11}-2\cdot t^{13}}{t^{24}}\cdot xy^{8}+\tfrac{-1-t-t^{2}}{t^{21}}\cdot y^{9}+\tfrac{1}{t^{12}}\cdot x^{4}y^{4}+\tfrac{1-3\cdot t-t^{2}+3\cdot t^{3}+3\cdot t^{4}+t^{5}-2\cdot t^{6}-2\cdot t^{7}+2\cdot t^{8}+2\cdot t^{9}+2\cdot t^{10}}{t^{16}}\cdot x^{3}y^{5}+\tfrac{1+t-2\cdot t^{2}-2\cdot t^{3}-2\cdot t^{4}+2\cdot t^{5}-t^{7}-3\cdot t^{8}-3\cdot t^{9}+2\cdot t^{11}-t^{12}}{t^{19}}\cdot x^{2}y^{6}+\tfrac{1+3\cdot t+3\cdot t^{2}+2\cdot t^{3}+2\cdot t^{5}+5\cdot t^{6}+5\cdot t^{7}+3\cdot t^{8}-t^{10}+t^{11}+t^{12}}{t^{21}}\cdot xy^{7}+\tfrac{1+t+t^{2}}{t^{21}}\cdot y^{8}+\tfrac{1+2\cdot t-t^{3}+t^{5}+2\cdot t^{6}+t^{7}-t^{8}-t^{9}-t^{10}}{t^{13}}\cdot x^{3}y^{4}+\tfrac{-1-3\cdot t+t^{2}-3\cdot t^{4}-6\cdot t^{5}-6\cdot t^{6}-4\cdot t^{7}+t^{8}-t^{9}-2\cdot t^{10}-3\cdot t^{11}}{t^{17}}\cdot x^{2}y^{5}+\tfrac{-1-2\cdot t-2\cdot t^{2}+3\cdot t^{4}-2\cdot t^{6}-4\cdot t^{7}-t^{8}+3\cdot t^{9}+t^{10}-t^{11}}{t^{19}}\cdot xy^{6}+\tfrac{1+t+t^{2}}{t^{15}}\cdot y^{7}+\tfrac{-2+2\cdot t^{2}-2\cdot t^{4}-2\cdot t^{5}-2\cdot t^{6}}{t^{10}}\cdot x^{3}y^{3}+\tfrac{1-2\cdot t-3\cdot t^{2}+3\cdot t^{4}+3\cdot t^{5}+t^{6}-4\cdot t^{7}-t^{8}+t^{9}+t^{10}}{t^{14}}\cdot x^{2}y^{4}+\tfrac{-1-4\cdot t-4\cdot t^{2}-3\cdot t^{3}-2\cdot t^{6}-3\cdot t^{7}-t^{8}}{t^{15}}\cdot xy^{5}+\tfrac{-1-t-t^{2}}{t^{15}}\cdot y^{6}+\tfrac{1-t-t^{2}+t^{4}+t^{5}+t^{6}}{t^{7}}\cdot x^{3}y^{2}+\tfrac{1+4\cdot t+3\cdot t^{2}-t^{3}+2\cdot t^{5}+4\cdot t^{6}+3\cdot t^{7}}{t^{11}}\cdot x^{2}y^{3}+\tfrac{1+2\cdot t+2\cdot t^{2}+t^{3}-t^{4}-2\cdot t^{5}+2\cdot t^{6}+t^{7}}{t^{13}}\cdot xy^{4}+\tfrac{-1}{t^{8}}\cdot y^{5}+\tfrac{2}{t^{3}}\cdot x^{3}y+\tfrac{-1-t+2\cdot t^{2}+2\cdot t^{3}-t^{4}-t^{5}-t^{6}}{t^{8}}\cdot x^{2}y^{2}+\tfrac{1+3\cdot t+t^{2}}{t^{8}}\cdot xy^{3}+\tfrac{1}{t^{8}}\cdot y^{4}-x^{3}+\tfrac{-1-3\cdot t}{t^{4}}\cdot x^{2}y+\tfrac{-1-t}{t^{6}}\cdot xy^{2}+\tfrac{1}{t}\cdot x^{2}
      \end{math}
   \end{center}
   \vspace*{0.5cm}
   The vertices of the tropical curve are:
   \begin{displaymath}
      (2,-1/2),\;\;(-1,-1/2),\;\;(5,-1/2),\;\;(-3/2,-1/2),\;\;(-4,0),\;\;(1,-1),\;\;(-1/2,-1),\;\;
    \end{displaymath}
    \begin{displaymath}
(-1,-2/3),\;\;(4,-1),\;\;(1/2,-3/2),\;\;(3,-3/2),\;\;(1/5,-8/5),\;\;(-1/2,-5/4),\;\;
    \end{displaymath}
    \begin{displaymath}
(1/2,-11/4),\;\;(-1/5,-12/5),\;\;(-3,-5/2),\;\;(4,-4),\;\;(1/2,-3),\;\;(1,-10/3),\;\;(3/2,-7/2),\;\;
   \end{displaymath}
    \begin{displaymath}
(-1/2,-5/2),\;\;(-4,-3),\;\;(-1,-3),\;\;(-5,-7/2),\;\;(1,-7/2),\;\;(-2,-7/2)
\end{displaymath}
   \vspace*{0.5cm}
The tropical curve is:
   \begin{center}

    \begin{texdraw}
       \drawdim cm  \relunitscale 0.7 \arrowheadtype t:V
       \linewd 0.1  \lpatt (1 0)

       \setgray 0.6
       \relunitscale 1.2
       \move (2 1.5) \fcir f:0 r:0.16
       \move (2 1.5) \lvec (-1 1.5)
       \htext (0.5 0.5){$2$}
       \move (2 1.5) \lvec (5 1.5)
       \htext (3.5 0.5){$2$}
       \move (2 1.5) \lvec (1 1)
       \move (2 1.5) \rlvec (2.5 1.25)
       \move (-1 1.5) \fcir f:0 r:0.16
       \move (-1 1.5) \lvec (-1.5 1.5)
       \htext (-1.25 0.5){$2$}
       \move (-1 1.5) \lvec (-1 1.33)
       \move (-1 1.5) \rlvec (0 2.5)
       \move (5 1.5) \fcir f:0 r:0.16
       \move (5 1.5) \lvec (4 1)
       \move (5 1.5) \rlvec (2.5 1.25)
       \move (5 1.5) \rlvec (2.5 0)
       \htext (5.83 1.5){$2$}
       \move (-1.5 1.5) \fcir f:0 r:0.16
       \move (-1.5 1.5) \lvec (-4 2)
       \move (-1.5 1.5) \lvec (-1 1.33)
       \move (-4 2) \fcir f:0 r:0.16
       \move (-4 2) \rlvec (-2.5 0)
       \move (-4 2) \rlvec (-2.5 0.625)
       \move (1 1) \fcir f:0 r:0.16
       \move (1 1) \lvec (-0.5 1)
       \htext (0.25 0){$3$}
       \move (1 1) \lvec (4 1)
       \htext (2.5 0){$2$}
       \move (1 1) \lvec (0.5 0.5)
       \move (-0.5 1) \fcir f:0 r:0.16
       \move (-0.5 1) \lvec (-1 1.33)
       \move (-0.5 1) \lvec (-0.5 0.75)
       \htext (-0.5 -0.12){$2$}
       \move (-1 1.33) \fcir f:0 r:0.16
       \move (4 1) \fcir f:0 r:0.16
       \move (4 1) \lvec (3 0.5)
       \move (4 1) \rlvec (2.5 0)
       \htext (4.83 1){$2$}
       \move (0.5 0.5) \fcir f:0 r:0.16
       \move (0.5 0.5) \lvec (3 0.5)
       \htext (1.75 -0.5){$2$}
       \move (0.5 0.5) \lvec (0.2 0.4)
       \move (3 0.5) \fcir f:0 r:0.16
       \move (3 0.5) \lvec (0.5 -0.75)
       \move (3 0.5) \rlvec (2.5 0)
       \htext (3.83 0.5){$2$}
       \move (0.2 0.4) \fcir f:0 r:0.16
       \move (0.2 0.4) \lvec (-0.5 0.75)
       \move (0.2 0.4) \lvec (-0.2 -0.4)
       \move (-0.5 0.75) \fcir f:0 r:0.16
       \move (-0.5 0.75) \lvec (-3 -0.5)
       \move (0.5 -0.75) \fcir f:0 r:0.16
       \move (0.5 -0.75) \lvec (-0.2 -0.4)
       \move (0.5 -0.75) \lvec (0.5 -1)
       \htext (0.5 -1.87){$2$}
       \move (-0.2 -0.4) \fcir f:0 r:0.16
       \move (-0.2 -0.4) \lvec (-0.5 -0.5)
       \move (-3 -0.5) \fcir f:0 r:0.16
       \move (-3 -0.5) \lvec (-0.5 -0.5)
       \htext (-1.75 -1.5){$2$}
       \move (-3 -0.5) \lvec (-4 -1)
       \move (-3 -0.5) \rlvec (-2.5 0)
       \htext (-3.83 -0.5){$2$}
       \move (4 -2) \fcir f:0 r:0.16
       \move (4 -2) \lvec (1.5 -1.5)
       \move (4 -2) \rlvec (2.5 0)
       \move (4 -2) \rlvec (2.5 -0.625)
       \move (0.5 -1) \fcir f:0 r:0.16
       \move (0.5 -1) \lvec (1 -1.33)
       \move (0.5 -1) \lvec (-1 -1)
       \htext (-0.25 -2){$3$}
       \move (1 -1.33) \fcir f:0 r:0.16
       \move (1 -1.33) \lvec (1.5 -1.5)
       \move (1 -1.33) \lvec (1 -1.5)
       \move (1.5 -1.5) \fcir f:0 r:0.16
       \move (1.5 -1.5) \lvec (1 -1.5)
       \htext (1.25 -2.5){$2$}
       \move (-0.5 -0.5) \fcir f:0 r:0.16
       \move (-0.5 -0.5) \lvec (-1 -1)
       \move (-4 -1) \fcir f:0 r:0.16
       \move (-4 -1) \lvec (-1 -1)
       \htext (-2.5 -2){$2$}
       \move (-4 -1) \lvec (-5 -1.5)
       \move (-4 -1) \rlvec (-2.5 0)
       \htext (-4.83 -1){$2$}
       \move (-1 -1) \fcir f:0 r:0.16
       \move (-1 -1) \lvec (-2 -1.5)
       \move (-5 -1.5) \fcir f:0 r:0.16
       \move (-5 -1.5) \lvec (-2 -1.5)
       \htext (-3.5 -2.5){$2$}
       \move (-5 -1.5) \rlvec (-2.5 -1.25)
       \move (-5 -1.5) \rlvec (-2.5 0)
       \htext (-5.83 -1.5){$2$}
       \move (1 -1.5) \fcir f:0 r:0.16
       \move (1 -1.5) \lvec (-2 -1.5)
       \htext (-0.5 -2.5){$2$}
       \move (1 -1.5) \rlvec (0 -2.5)
       \move (-2 -1.5) \fcir f:0 r:0.16
       \move (-2 -1.5) \rlvec (-2.5 -1.25)

        \move (-6 -3) \fcir f:0.8 r:0.08
        \move (-6 -2) \fcir f:0.8 r:0.08
        \move (-6 -1) \fcir f:0.8 r:0.08
        \move (-6 0) \fcir f:0.8 r:0.08
        \move (-6 1) \fcir f:0.8 r:0.08
        \move (-6 2) \fcir f:0.8 r:0.08
        \move (-6 3) \fcir f:0.8 r:0.08
        \move (-5 -3) \fcir f:0.8 r:0.08
        \move (-5 -2) \fcir f:0.8 r:0.08
        \move (-5 -1) \fcir f:0.8 r:0.08
        \move (-5 0) \fcir f:0.8 r:0.08
        \move (-5 1) \fcir f:0.8 r:0.08
        \move (-5 2) \fcir f:0.8 r:0.08
        \move (-5 3) \fcir f:0.8 r:0.08
        \move (-4 -3) \fcir f:0.8 r:0.08
        \move (-4 -2) \fcir f:0.8 r:0.08
        \move (-4 -1) \fcir f:0.8 r:0.08
        \move (-4 0) \fcir f:0.8 r:0.08
        \move (-4 1) \fcir f:0.8 r:0.08
        \move (-4 2) \fcir f:0.8 r:0.08
        \move (-4 3) \fcir f:0.8 r:0.08
        \move (-3 -3) \fcir f:0.8 r:0.08
        \move (-3 -2) \fcir f:0.8 r:0.08
        \move (-3 -1) \fcir f:0.8 r:0.08
        \move (-3 0) \fcir f:0.8 r:0.08
        \move (-3 1) \fcir f:0.8 r:0.08
        \move (-3 2) \fcir f:0.8 r:0.08
        \move (-3 3) \fcir f:0.8 r:0.08
        \move (-2 -3) \fcir f:0.8 r:0.08
        \move (-2 -2) \fcir f:0.8 r:0.08
        \move (-2 -1) \fcir f:0.8 r:0.08
        \move (-2 0) \fcir f:0.8 r:0.08
        \move (-2 1) \fcir f:0.8 r:0.08
        \move (-2 2) \fcir f:0.8 r:0.08
        \move (-2 3) \fcir f:0.8 r:0.08
        \move (-1 -3) \fcir f:0.8 r:0.08
        \move (-1 -2) \fcir f:0.8 r:0.08
        \move (-1 -1) \fcir f:0.8 r:0.08
        \move (-1 0) \fcir f:0.8 r:0.08
        \move (-1 1) \fcir f:0.8 r:0.08
        \move (-1 2) \fcir f:0.8 r:0.08
        \move (-1 3) \fcir f:0.8 r:0.08
        \move (0 -3) \fcir f:0.8 r:0.08
        \move (0 -2) \fcir f:0.8 r:0.08
        \move (0 -1) \fcir f:0.8 r:0.08
        \move (0 0) \fcir f:0.8 r:0.08
        \move (0 1) \fcir f:0.8 r:0.08
        \move (0 2) \fcir f:0.8 r:0.08
        \move (0 3) \fcir f:0.8 r:0.08
        \move (1 -3) \fcir f:0.8 r:0.08
        \move (1 -2) \fcir f:0.8 r:0.08
        \move (1 -1) \fcir f:0.8 r:0.08
        \move (1 0) \fcir f:0.8 r:0.08
        \move (1 1) \fcir f:0.8 r:0.08
        \move (1 2) \fcir f:0.8 r:0.08
        \move (1 3) \fcir f:0.8 r:0.08
        \move (2 -3) \fcir f:0.8 r:0.08
        \move (2 -2) \fcir f:0.8 r:0.08
        \move (2 -1) \fcir f:0.8 r:0.08
        \move (2 0) \fcir f:0.8 r:0.08
        \move (2 1) \fcir f:0.8 r:0.08
        \move (2 2) \fcir f:0.8 r:0.08
        \move (2 3) \fcir f:0.8 r:0.08
        \move (3 -3) \fcir f:0.8 r:0.08
        \move (3 -2) \fcir f:0.8 r:0.08
        \move (3 -1) \fcir f:0.8 r:0.08
        \move (3 0) \fcir f:0.8 r:0.08
        \move (3 1) \fcir f:0.8 r:0.08
        \move (3 2) \fcir f:0.8 r:0.08
        \move (3 3) \fcir f:0.8 r:0.08
        \move (4 -3) \fcir f:0.8 r:0.08
        \move (4 -2) \fcir f:0.8 r:0.08
        \move (4 -1) \fcir f:0.8 r:0.08
        \move (4 0) \fcir f:0.8 r:0.08
        \move (4 1) \fcir f:0.8 r:0.08
        \move (4 2) \fcir f:0.8 r:0.08
        \move (4 3) \fcir f:0.8 r:0.08
        \move (5 -3) \fcir f:0.8 r:0.08
        \move (5 -2) \fcir f:0.8 r:0.08
        \move (5 -1) \fcir f:0.8 r:0.08
        \move (5 0) \fcir f:0.8 r:0.08
        \move (5 1) \fcir f:0.8 r:0.08
        \move (5 2) \fcir f:0.8 r:0.08
        \move (5 3) \fcir f:0.8 r:0.08
        \move (6 -3) \fcir f:0.8 r:0.08
        \move (6 -2) \fcir f:0.8 r:0.08
        \move (6 -1) \fcir f:0.8 r:0.08
        \move (6 0) \fcir f:0.8 r:0.08
        \move (6 1) \fcir f:0.8 r:0.08
        \move (6 2) \fcir f:0.8 r:0.08
        \move (6 3) \fcir f:0.8 r:0.08
                          
    \end{texdraw}\end{center}

   \vspace*{0.5cm}
   The Newton subdivision of the tropical curve is:
   \vspace*{0.5cm}

   \begin{center}
            
    \begin{texdraw}
       \drawdim cm  \relunitscale 0.5  
       \linewd 0.05
       \relunitscale0.8
        \move (2 15)        
        \lvec (4 11)
        \move (4 11)        
        \lvec (4 4)
        \move (4 4)        
        \lvec (3 0)
        \move (3 0)        
        \lvec (2 0)
        \move (2 0)        
        \lvec (0 4)
        \move (0 4)        
        \lvec (0 11)
        \move (0 11)        
        \lvec (1 15)
        \move (1 15)        
        \lvec (2 15)

        \move (2 13)        
        \lvec (2 15)
        \move (3 13)        
        \lvec (3 11)
        \move (3 11)        
        \lvec (2 13)
        \move (1 13)        
        \lvec (1 15)
        \move (2 13)        
        \lvec (1 13)
        \move (4 9)        
        \lvec (3 11)
        \move (0 10)        
        \lvec (1 15)
        \move (1 13)        
        \lvec (0 10)
        \move (2 12)        
        \lvec (2 13)
        \move (2 11)        
        \lvec (2 12)
        \move (2 10)        
        \lvec (2 11)
        \move (3 11)        
        \lvec (3 9)
        \move (3 9)        
        \lvec (2 10)
        \move (0 10)        
        \lvec (2 13)
        \move (2 10)        
        \lvec (0 10)
        \move (4 7)        
        \lvec (3 9)
        \move (3 9)        
        \lvec (3 7)
        \move (3 7)        
        \lvec (2 10)
        \move (4 5)        
        \lvec (3 7)
        \move (1 8)        
        \lvec (2 10)
        \move (3 7)        
        \lvec (1 8)
        \move (1 8)        
        \lvec (0 10)
        \move (2 5)        
        \lvec (3 7)
        \move (4 5)        
        \lvec (2 5)
        \move (2 5)        
        \lvec (1 8)
        \move (1 8)        
        \lvec (1 6)
        \move (1 6)        
        \lvec (0 8)
        \move (3 0)        
        \lvec (4 5)
        \move (4 5)        
        \lvec (2 2)
        \move (2 4)        
        \lvec (2 5)
        \move (2 3)        
        \lvec (2 4)
        \move (2 2)        
        \lvec (2 3)
        \move (4 5)        
        \lvec (3 2)
        \move (3 2)        
        \lvec (2 2)
        \move (3 0)        
        \lvec (3 2)
        \move (2 5)        
        \lvec (1 6)
        \move (1 6)        
        \lvec (1 4)
        \move (1 4)        
        \lvec (0 6)
        \move (2 2)        
        \lvec (1 4)
        \move (1 4)        
        \lvec (1 2)
        \move (2 0)        
        \lvec (2 2)
        \move (0 0) \fcir f:0.6 r:0.25
        \move (0 1) \fcir f:0.6 r:0.25
        \move (0 2) \fcir f:0.6 r:0.25
        \move (0 3) \fcir f:0.6 r:0.25
        \move (0 4) \fcir f:0.6 r:0.25
        \move (0 5) \fcir f:0.6 r:0.25
        \move (0 6) \fcir f:0.6 r:0.25
        \move (0 7) \fcir f:0.6 r:0.25
        \move (0 8) \fcir f:0.6 r:0.25
        \move (0 9) \fcir f:0.6 r:0.25
        \move (0 10) \fcir f:0.6 r:0.25
        \move (0 11) \fcir f:0.6 r:0.25
        \move (0 12) \fcir f:0.6 r:0.25
        \move (0 13) \fcir f:0.6 r:0.25
        \move (0 14) \fcir f:0.6 r:0.25
        \move (0 15) \fcir f:0.6 r:0.25
        \move (1 0) \fcir f:0.6 r:0.25
        \move (1 1) \fcir f:0.6 r:0.25
        \move (1 2) \fcir f:0.6 r:0.25
        \move (1 3) \fcir f:0.6 r:0.25
        \move (1 4) \fcir f:0.6 r:0.25
        \move (1 5) \fcir f:0.6 r:0.25
        \move (1 6) \fcir f:0.6 r:0.25
        \move (1 7) \fcir f:0.6 r:0.25
        \move (1 8) \fcir f:0.6 r:0.25
        \move (1 9) \fcir f:0.6 r:0.25
        \move (1 10) \fcir f:0.6 r:0.25
        \move (1 11) \fcir f:0.6 r:0.25
        \move (1 12) \fcir f:0.6 r:0.25
        \move (1 13) \fcir f:0.6 r:0.25
        \move (1 14) \fcir f:0.6 r:0.25
        \move (1 15) \fcir f:0.6 r:0.25
        \move (2 0) \fcir f:0.6 r:0.25
        \move (2 1) \fcir f:0.6 r:0.25
        \move (2 2) \fcir f:0.6 r:0.25
        \move (2 3) \fcir f:0.6 r:0.25
        \move (2 4) \fcir f:0.6 r:0.25
        \move (2 5) \fcir f:0.6 r:0.25
        \move (2 6) \fcir f:0.6 r:0.25
        \move (2 7) \fcir f:0.6 r:0.25
        \move (2 8) \fcir f:0.6 r:0.25
        \move (2 9) \fcir f:0.6 r:0.25
        \move (2 10) \fcir f:0.6 r:0.25
        \move (2 11) \fcir f:0.6 r:0.25
        \move (2 12) \fcir f:0.6 r:0.25
        \move (2 13) \fcir f:0.6 r:0.25
        \move (2 14) \fcir f:0.6 r:0.25
        \move (2 15) \fcir f:0.6 r:0.25
        \move (3 0) \fcir f:0.6 r:0.25
        \move (3 1) \fcir f:0.6 r:0.25
        \move (3 2) \fcir f:0.6 r:0.25
        \move (3 3) \fcir f:0.6 r:0.25
        \move (3 4) \fcir f:0.6 r:0.25
        \move (3 5) \fcir f:0.6 r:0.25
        \move (3 6) \fcir f:0.6 r:0.25
        \move (3 7) \fcir f:0.6 r:0.25
        \move (3 8) \fcir f:0.6 r:0.25
        \move (3 9) \fcir f:0.6 r:0.25
        \move (3 10) \fcir f:0.6 r:0.25
        \move (3 11) \fcir f:0.6 r:0.25
        \move (3 12) \fcir f:0.6 r:0.25
        \move (3 13) \fcir f:0.6 r:0.25
        \move (3 14) \fcir f:0.6 r:0.25
        \move (3 15) \fcir f:0.6 r:0.25
        \move (4 0) \fcir f:0.6 r:0.25
        \move (4 1) \fcir f:0.6 r:0.25
        \move (4 2) \fcir f:0.6 r:0.25
        \move (4 3) \fcir f:0.6 r:0.25
        \move (4 4) \fcir f:0.6 r:0.25
        \move (4 5) \fcir f:0.6 r:0.25
        \move (4 6) \fcir f:0.6 r:0.25
        \move (4 7) \fcir f:0.6 r:0.25
        \move (4 8) \fcir f:0.6 r:0.25
        \move (4 9) \fcir f:0.6 r:0.25
        \move (4 10) \fcir f:0.6 r:0.25
        \move (4 11) \fcir f:0.6 r:0.25
        \move (4 12) \fcir f:0.6 r:0.25
        \move (4 13) \fcir f:0.6 r:0.25
        \move (4 14) \fcir f:0.6 r:0.25
        \move (4 15) \fcir f:0.6 r:0.25
       \move (2 15) 
       \fcir f:0 r:0.31
       \move (3 13) 
       \fcir f:0 r:0.31
       \move (2 13) 
       \fcir f:0 r:0.31
       \move (3 11) 
       \fcir f:0 r:0.31
       \move (1 15) 
       \fcir f:0 r:0.31
       \move (1 13) 
       \fcir f:0 r:0.31
       \move (4 11) 
       \fcir f:0 r:0.31
       \move (4 9) 
       \fcir f:0 r:0.31
       \move (0 10) 
       \fcir f:0 r:0.31
       \move (0 11) 
       \fcir f:0 r:0.31
       \move (2 12) 
       \fcir f:0 r:0.31
       \move (2 11) 
       \fcir f:0 r:0.31
       \move (3 9) 
       \fcir f:0 r:0.31
       \move (2 10) 
       \fcir f:0 r:0.31
       \move (4 7) 
       \fcir f:0 r:0.31
       \move (3 7) 
       \fcir f:0 r:0.31
       \move (4 5) 
       \fcir f:0 r:0.31
       \move (1 8) 
       \fcir f:0 r:0.31
       \move (2 5) 
       \fcir f:0 r:0.31
       \move (0 8) 
       \fcir f:0 r:0.31
       \move (1 6) 
       \fcir f:0 r:0.31
       \move (4 4) 
       \fcir f:0 r:0.31
       \move (3 0) 
       \fcir f:0 r:0.31
       \move (2 4) 
       \fcir f:0 r:0.31
       \move (2 3) 
       \fcir f:0 r:0.31
       \move (2 2) 
       \fcir f:0 r:0.31
       \move (3 2) 
       \fcir f:0 r:0.31
       \move (0 6) 
       \fcir f:0 r:0.31
       \move (1 4) 
       \fcir f:0 r:0.31
       \move (0 4) 
       \fcir f:0 r:0.31
       \move (1 2) 
       \fcir f:0 r:0.31
       \move (2 0) 
       \fcir f:0 r:0.31
   \end{texdraw}
        \end{center}

\subsection{The non-homogeneous tropical curve of the $8_1$ knot}
\lbl{sub.81non}

The non-homogeneous non-commutative $A$-polynomial $A^{nh}_{8_1}(M,L,q)$ has
$2112$ terms, which we not present here. The vertices of the tropical curve 
are:

   \begin{displaymath}
      (3,-1/2),\;\;(-1,-1/2),\;\;(6,-1/2),\;\;(-2,-1/2),\;\;(9,-1/2),\;\;(2,-1),\;\;(-1,-1),\;\;(-5/2,-1/2),\;\;
\end{displaymath}
\begin{displaymath}
(-6,0),\;\;(5,-1),\;\;(-2,-3/5),\;\;(8,-1),\;\;(3/2,-3/2),\;\;(4,-3/2),\;\;(-1/2,-3/2),\;\;(-3/4,-11/8),\;\;
\end{displaymath}
\begin{displaymath}
(7,-3/2),\;\;(1,-2),\;\;(3,-2),\;\;(0,-2),\;\;(6,-2),\;\;(0,-5/2),\;\;(5/2,-5/2),\;\;(5,-5/2),\;\;(1,-3),\;\;(0,-3),\;\;
\end{displaymath}
\begin{displaymath}
(-5,-7/2),\;\;(0,-7/2),\;\;(-1,-3),\;\;(-5/2,-7/2),\;\;(3/4,-37/8),\;\;(0,-4),\;\;(1/2,-9/2),\;\;(-6,-4),\;\;
\end{displaymath}
\begin{displaymath}
(6,-6),\;\;(1,-5),\;\;(2,-27/5),\;\;(5/2,-11/2),\;\;(-3,-4),\;\;(-1,-4),\;\;(-7,-9/2),\;\;(-4,-9/2),\;\;
\end{displaymath}
\begin{displaymath}
(-3/2,-9/2),\;\;(-8,-5),\;\;(-5,-5),\;\;(-2,-5),\;\;(-9,-11/2),\;\;(2,-11/2),\;\;(-6,-11/2),\;\;(1,-11/2),\;\;
\end{displaymath}
\begin{displaymath}
(-3,-11/2)
   \end{displaymath}
   \vspace*{0.5cm}

The tropical curve is:

   \begin{center}

    \begin{texdraw}
       \drawdim cm  \relunitscale 0.6 \arrowheadtype t:V
       \linewd 0.1  \lpatt (1 0)

       \setgray 0.6
       \relunitscale 0.66
       \move (3 2.5) \fcir f:0 r:0.3
       \move (3 2.5) \lvec (-1 2.5)
       \htext (1 1){$2$}
       \move (3 2.5) \lvec (6 2.5)
       \htext (4.5 1){$2$}
       \move (3 2.5) \lvec (2 2)
       \move (3 2.5) \rlvec (2.5 1.25)
       \move (-1 2.5) \fcir f:0 r:0.3
       \move (-1 2.5) \lvec (-2 2.5)
       \htext (-1.5 1){$2$}
       \move (-1 2.5) \lvec (-1 2)
       \move (-1 2.5) \rlvec (0 2.5)
       \move (6 2.5) \fcir f:0 r:0.3
       \move (6 2.5) \lvec (9 2.5)
       \htext (7.5 1){$2$}
       \move (6 2.5) \lvec (5 2)
       \move (6 2.5) \rlvec (2.5 1.25)
       \move (-2 2.5) \fcir f:0 r:0.3
       \move (-2 2.5) \lvec (-2.5 2.5)
       \htext (-2.25 1){$2$}
       \move (-2 2.5) \lvec (-2 2.4)
       \move (-2 2.5) \rlvec (0 2.5)
       \move (9 2.5) \fcir f:0 r:0.3
       \move (9 2.5) \lvec (8 2)
       \move (9 2.5) \rlvec (2.5 1.25)
       \move (9 2.5) \rlvec (2.5 0)
       \htext (10.5 2.5){$2$}
       \move (2 2) \fcir f:0 r:0.3
       \move (2 2) \lvec (-1 2)
       \htext (0.5 0.5){$3$}
       \move (2 2) \lvec (5 2)
       \htext (3.5 0.5){$2$}
       \move (2 2) \lvec (1.5 1.5)
       \move (-1 2) \fcir f:0 r:0.3
       \move (-1 2) \lvec (-2 2.4)
       \move (-1 2) \lvec (-0.75 1.62)
       \move (-2.5 2.5) \fcir f:0 r:0.3
       \move (-2.5 2.5) \lvec (-6 3)
       \move (-2.5 2.5) \lvec (-2 2.4)
       \move (-6 3) \fcir f:0 r:0.3
       \move (-6 3) \rlvec (-2.5 0)
       \move (-6 3) \rlvec (-2.5 0.41666666666666666666666666666666666666666666666667)
       \move (5 2) \fcir f:0 r:0.3
       \move (5 2) \lvec (8 2)
       \htext (6.5 0.5){$2$}
       \move (5 2) \lvec (4 1.5)
       \move (-2 2.4) \fcir f:0 r:0.3
       \move (8 2) \fcir f:0 r:0.3
       \move (8 2) \lvec (7 1.5)
       \move (8 2) \rlvec (2.5 0)
       \htext (9.5 2){$2$}
       \move (1.5 1.5) \fcir f:0 r:0.3
       \move (1.5 1.5) \lvec (4 1.5)
       \htext (2.75 0){$2$}
       \move (1.5 1.5) \lvec (-0.5 1.5)
       \htext (0.5 0){$2$}
       \move (1.5 1.5) \lvec (1 1)
       \move (4 1.5) \fcir f:0 r:0.3
       \move (4 1.5) \lvec (7 1.5)
       \htext (5.5 0){$2$}
       \move (4 1.5) \lvec (3 1)
       \move (-0.5 1.5) \fcir f:0 r:0.3
       \move (-0.5 1.5) \lvec (-0.75 1.62)
       \htext (-0.62 0.06){$2$}
       \move (-0.5 1.5) \lvec (0 1)
       \htext (-0.25 -0.25){$2$}
       \move (-0.75 1.62) \fcir f:0 r:0.3
       \move (-0.75 1.62) \lvec (-5 -0.5)
       \move (7 1.5) \fcir f:0 r:0.3
       \move (7 1.5) \lvec (6 1)
       \move (7 1.5) \rlvec (2.5 0)
       \htext (8.5 1.5){$2$}
       \move (1 1) \fcir f:0 r:0.3
       \move (1 1) \lvec (3 1)
       \htext (2 -0.5){$3$}
       \move (1 1) \lvec (0 1)
       \htext (0.5 -0.5){$2$}
       \move (1 1) \lvec (0 0.5)
       \move (3 1) \fcir f:0 r:0.3
       \move (3 1) \lvec (6 1)
       \htext (4.5 -0.5){$2$}
       \move (3 1) \lvec (2.5 0.5)
       \move (0 1) \fcir f:0 r:0.3
       \move (0 1) \lvec (0 0.5)
       \htext (0 -0.75){$2$}
       \move (6 1) \fcir f:0 r:0.3
       \move (6 1) \lvec (5 0.5)
       \move (6 1) \rlvec (2.5 0)
       \htext (7.5 1){$2$}
       \move (0 0.5) \fcir f:0 r:0.3
       \move (0 0.5) \lvec (0 0)
       \htext (0 -1.25){$2$}
       \move (0 0.5) \lvec (-1 0)
       \move (2.5 0.5) \fcir f:0 r:0.3
       \move (2.5 0.5) \lvec (5 0.5)
       \htext (3.75 -1){$2$}
       \move (2.5 0.5) \lvec (1 0)
       \move (5 0.5) \fcir f:0 r:0.3
       \move (5 0.5) \lvec (0.75 -1.62)
       \move (5 0.5) \rlvec (2.5 0)
       \htext (6.5 0.5){$2$}
       \move (1 0) \fcir f:0 r:0.3
       \move (1 0) \lvec (0 0)
       \move (1 0) \lvec (0 -0.5)
       \move (0 0) \fcir f:0 r:0.3
       \move (0 0) \lvec (0 -0.5)
       \htext (0 -1.75){$2$}
       \move (0 0) \lvec (-1 0)
       \move (-5 -0.5) \fcir f:0 r:0.3
       \move (-5 -0.5) \lvec (-2.5 -0.5)
       \htext (-3.75 -2){$2$}
       \move (-5 -0.5) \lvec (-6 -1)
       \move (-5 -0.5) \rlvec (-2.5 0)
       \htext (-6.5 -0.5){$2$}
       \move (0 -0.5) \fcir f:0 r:0.3
       \move (0 -0.5) \lvec (0 -1)
       \htext (0 -2.25){$2$}
       \move (0 -0.5) \lvec (-1 -1)
       \move (-1 0) \fcir f:0 r:0.3
       \move (-1 0) \lvec (-2.5 -0.5)
       \move (-2.5 -0.5) \fcir f:0 r:0.3
       \move (-2.5 -0.5) \lvec (-3 -1)
       \move (0.75 -1.62) \fcir f:0 r:0.3
       \move (0.75 -1.62) \lvec (0.5 -1.5)
       \htext (0.62 -3.06){$2$}
       \move (0.75 -1.62) \lvec (1 -2)
       \move (0 -1) \fcir f:0 r:0.3
       \move (0 -1) \lvec (0.5 -1.5)
       \htext (0.25 -2.75){$2$}
       \move (0 -1) \lvec (-1 -1)
       \htext (-0.5 -2.5){$2$}
       \move (0.5 -1.5) \fcir f:0 r:0.3
       \move (0.5 -1.5) \lvec (-1.5 -1.5)
       \htext (-0.5 -3){$2$}
       \move (-6 -1) \fcir f:0 r:0.3
       \move (-6 -1) \lvec (-3 -1)
       \htext (-4.5 -2.5){$2$}
       \move (-6 -1) \lvec (-7 -1.5)
       \move (-6 -1) \rlvec (-2.5 0)
       \htext (-7.5 -1){$2$}
       \move (6 -3) \fcir f:0 r:0.3
       \move (6 -3) \lvec (2.5 -2.5)
       \move (6 -3) \rlvec (2.5 0)
       \move (6 -3) \rlvec (2.5 -0.41666666666666666666666666666666666666666666666667)
       \move (1 -2) \fcir f:0 r:0.3
       \move (1 -2) \lvec (2 -2.4)
       \move (1 -2) \lvec (-2 -2)
       \htext (-0.5 -3.5){$3$}
       \move (1 -2) \lvec (1 -2.5)
       \move (2 -2.4) \fcir f:0 r:0.3
       \move (2 -2.4) \lvec (2.5 -2.5)
       \move (2 -2.4) \lvec (2 -2.5)
       \move (2.5 -2.5) \fcir f:0 r:0.3
       \move (2.5 -2.5) \lvec (2 -2.5)
       \htext (2.25 -4){$2$}
       \move (-3 -1) \fcir f:0 r:0.3
       \move (-3 -1) \lvec (-1 -1)
       \htext (-2 -2.5){$3$}
       \move (-3 -1) \lvec (-4 -1.5)
       \move (-1 -1) \fcir f:0 r:0.3
       \move (-1 -1) \lvec (-1.5 -1.5)
       \move (-7 -1.5) \fcir f:0 r:0.3
       \move (-7 -1.5) \lvec (-4 -1.5)
       \htext (-5.5 -3){$2$}
       \move (-7 -1.5) \lvec (-8 -2)
       \move (-7 -1.5) \rlvec (-2.5 0)
       \htext (-8.5 -1.5){$2$}
       \move (-4 -1.5) \fcir f:0 r:0.3
       \move (-4 -1.5) \lvec (-1.5 -1.5)
       \htext (-2.75 -3){$2$}
       \move (-4 -1.5) \lvec (-5 -2)
       \move (-1.5 -1.5) \fcir f:0 r:0.3
       \move (-1.5 -1.5) \lvec (-2 -2)
       \move (-8 -2) \fcir f:0 r:0.3
       \move (-8 -2) \lvec (-5 -2)
       \htext (-6.5 -3.5){$2$}
       \move (-8 -2) \lvec (-9 -2.5)
       \move (-8 -2) \rlvec (-2.5 0)
       \htext (-9.5 -2){$2$}
       \move (-5 -2) \fcir f:0 r:0.3
       \move (-5 -2) \lvec (-2 -2)
       \htext (-3.5 -3.5){$2$}
       \move (-5 -2) \lvec (-6 -2.5)
       \move (-2 -2) \fcir f:0 r:0.3
       \move (-2 -2) \lvec (-3 -2.5)
       \move (-9 -2.5) \fcir f:0 r:0.3
       \move (-9 -2.5) \lvec (-6 -2.5)
       \htext (-7.5 -4){$2$}
       \move (-9 -2.5) \rlvec (-2.5 -1.25)
       \move (-9 -2.5) \rlvec (-2.5 0)
       \htext (-10.5 -2.5){$2$}
       \move (2 -2.5) \fcir f:0 r:0.3
       \move (2 -2.5) \lvec (1 -2.5)
       \htext (1.5 -4){$2$}
       \move (2 -2.5) \rlvec (0 -2.5)
       \move (-6 -2.5) \fcir f:0 r:0.3
       \move (-6 -2.5) \lvec (-3 -2.5)
       \htext (-4.5 -4){$2$}
       \move (-6 -2.5) \rlvec (-2.5 -1.25)
       \move (1 -2.5) \fcir f:0 r:0.3
       \move (1 -2.5) \lvec (-3 -2.5)
       \htext (-1 -4){$2$}
       \move (1 -2.5) \rlvec (0 -2.5)
       \move (-3 -2.5) \fcir f:0 r:0.3
       \move (-3 -2.5) \rlvec (-2.5 -1.25)

        \move (-10 -4) \fcir f:0.8 r:0.15
        \move (-10 -3) \fcir f:0.8 r:0.15
        \move (-10 -2) \fcir f:0.8 r:0.15
        \move (-10 -1) \fcir f:0.8 r:0.15
        \move (-10 0) \fcir f:0.8 r:0.15
        \move (-10 1) \fcir f:0.8 r:0.15
        \move (-10 2) \fcir f:0.8 r:0.15
        \move (-10 3) \fcir f:0.8 r:0.15
        \move (-10 4) \fcir f:0.8 r:0.15
        \move (-9 -4) \fcir f:0.8 r:0.15
        \move (-9 -3) \fcir f:0.8 r:0.15
        \move (-9 -2) \fcir f:0.8 r:0.15
        \move (-9 -1) \fcir f:0.8 r:0.15
        \move (-9 0) \fcir f:0.8 r:0.15
        \move (-9 1) \fcir f:0.8 r:0.15
        \move (-9 2) \fcir f:0.8 r:0.15
        \move (-9 3) \fcir f:0.8 r:0.15
        \move (-9 4) \fcir f:0.8 r:0.15
        \move (-8 -4) \fcir f:0.8 r:0.15
        \move (-8 -3) \fcir f:0.8 r:0.15
        \move (-8 -2) \fcir f:0.8 r:0.15
        \move (-8 -1) \fcir f:0.8 r:0.15
        \move (-8 0) \fcir f:0.8 r:0.15
        \move (-8 1) \fcir f:0.8 r:0.15
        \move (-8 2) \fcir f:0.8 r:0.15
        \move (-8 3) \fcir f:0.8 r:0.15
        \move (-8 4) \fcir f:0.8 r:0.15
        \move (-7 -4) \fcir f:0.8 r:0.15
        \move (-7 -3) \fcir f:0.8 r:0.15
        \move (-7 -2) \fcir f:0.8 r:0.15
        \move (-7 -1) \fcir f:0.8 r:0.15
        \move (-7 0) \fcir f:0.8 r:0.15
        \move (-7 1) \fcir f:0.8 r:0.15
        \move (-7 2) \fcir f:0.8 r:0.15
        \move (-7 3) \fcir f:0.8 r:0.15
        \move (-7 4) \fcir f:0.8 r:0.15
        \move (-6 -4) \fcir f:0.8 r:0.15
        \move (-6 -3) \fcir f:0.8 r:0.15
        \move (-6 -2) \fcir f:0.8 r:0.15
        \move (-6 -1) \fcir f:0.8 r:0.15
        \move (-6 0) \fcir f:0.8 r:0.15
        \move (-6 1) \fcir f:0.8 r:0.15
        \move (-6 2) \fcir f:0.8 r:0.15
        \move (-6 3) \fcir f:0.8 r:0.15
        \move (-6 4) \fcir f:0.8 r:0.15
        \move (-5 -4) \fcir f:0.8 r:0.15
        \move (-5 -3) \fcir f:0.8 r:0.15
        \move (-5 -2) \fcir f:0.8 r:0.15
        \move (-5 -1) \fcir f:0.8 r:0.15
        \move (-5 0) \fcir f:0.8 r:0.15
        \move (-5 1) \fcir f:0.8 r:0.15
        \move (-5 2) \fcir f:0.8 r:0.15
        \move (-5 3) \fcir f:0.8 r:0.15
        \move (-5 4) \fcir f:0.8 r:0.15
        \move (-4 -4) \fcir f:0.8 r:0.15
        \move (-4 -3) \fcir f:0.8 r:0.15
        \move (-4 -2) \fcir f:0.8 r:0.15
        \move (-4 -1) \fcir f:0.8 r:0.15
        \move (-4 0) \fcir f:0.8 r:0.15
        \move (-4 1) \fcir f:0.8 r:0.15
        \move (-4 2) \fcir f:0.8 r:0.15
        \move (-4 3) \fcir f:0.8 r:0.15
        \move (-4 4) \fcir f:0.8 r:0.15
        \move (-3 -4) \fcir f:0.8 r:0.15
        \move (-3 -3) \fcir f:0.8 r:0.15
        \move (-3 -2) \fcir f:0.8 r:0.15
        \move (-3 -1) \fcir f:0.8 r:0.15
        \move (-3 0) \fcir f:0.8 r:0.15
        \move (-3 1) \fcir f:0.8 r:0.15
        \move (-3 2) \fcir f:0.8 r:0.15
        \move (-3 3) \fcir f:0.8 r:0.15
        \move (-3 4) \fcir f:0.8 r:0.15
        \move (-2 -4) \fcir f:0.8 r:0.15
        \move (-2 -3) \fcir f:0.8 r:0.15
        \move (-2 -2) \fcir f:0.8 r:0.15
        \move (-2 -1) \fcir f:0.8 r:0.15
        \move (-2 0) \fcir f:0.8 r:0.15
        \move (-2 1) \fcir f:0.8 r:0.15
        \move (-2 2) \fcir f:0.8 r:0.15
        \move (-2 3) \fcir f:0.8 r:0.15
        \move (-2 4) \fcir f:0.8 r:0.15
        \move (-1 -4) \fcir f:0.8 r:0.15
        \move (-1 -3) \fcir f:0.8 r:0.15
        \move (-1 -2) \fcir f:0.8 r:0.15
        \move (-1 -1) \fcir f:0.8 r:0.15
        \move (-1 0) \fcir f:0.8 r:0.15
        \move (-1 1) \fcir f:0.8 r:0.15
        \move (-1 2) \fcir f:0.8 r:0.15
        \move (-1 3) \fcir f:0.8 r:0.15
        \move (-1 4) \fcir f:0.8 r:0.15
        \move (0 -4) \fcir f:0.8 r:0.15
        \move (0 -3) \fcir f:0.8 r:0.15
        \move (0 -2) \fcir f:0.8 r:0.15
        \move (0 -1) \fcir f:0.8 r:0.15
        \move (0 0) \fcir f:0.8 r:0.15
        \move (0 1) \fcir f:0.8 r:0.15
        \move (0 2) \fcir f:0.8 r:0.15
        \move (0 3) \fcir f:0.8 r:0.15
        \move (0 4) \fcir f:0.8 r:0.15
        \move (1 -4) \fcir f:0.8 r:0.15
        \move (1 -3) \fcir f:0.8 r:0.15
        \move (1 -2) \fcir f:0.8 r:0.15
        \move (1 -1) \fcir f:0.8 r:0.15
        \move (1 0) \fcir f:0.8 r:0.15
        \move (1 1) \fcir f:0.8 r:0.15
        \move (1 2) \fcir f:0.8 r:0.15
        \move (1 3) \fcir f:0.8 r:0.15
        \move (1 4) \fcir f:0.8 r:0.15
        \move (2 -4) \fcir f:0.8 r:0.15
        \move (2 -3) \fcir f:0.8 r:0.15
        \move (2 -2) \fcir f:0.8 r:0.15
        \move (2 -1) \fcir f:0.8 r:0.15
        \move (2 0) \fcir f:0.8 r:0.15
        \move (2 1) \fcir f:0.8 r:0.15
        \move (2 2) \fcir f:0.8 r:0.15
        \move (2 3) \fcir f:0.8 r:0.15
        \move (2 4) \fcir f:0.8 r:0.15
        \move (3 -4) \fcir f:0.8 r:0.15
        \move (3 -3) \fcir f:0.8 r:0.15
        \move (3 -2) \fcir f:0.8 r:0.15
        \move (3 -1) \fcir f:0.8 r:0.15
        \move (3 0) \fcir f:0.8 r:0.15
        \move (3 1) \fcir f:0.8 r:0.15
        \move (3 2) \fcir f:0.8 r:0.15
        \move (3 3) \fcir f:0.8 r:0.15
        \move (3 4) \fcir f:0.8 r:0.15
        \move (4 -4) \fcir f:0.8 r:0.15
        \move (4 -3) \fcir f:0.8 r:0.15
        \move (4 -2) \fcir f:0.8 r:0.15
        \move (4 -1) \fcir f:0.8 r:0.15
        \move (4 0) \fcir f:0.8 r:0.15
        \move (4 1) \fcir f:0.8 r:0.15
        \move (4 2) \fcir f:0.8 r:0.15
        \move (4 3) \fcir f:0.8 r:0.15
        \move (4 4) \fcir f:0.8 r:0.15
        \move (5 -4) \fcir f:0.8 r:0.15
        \move (5 -3) \fcir f:0.8 r:0.15
        \move (5 -2) \fcir f:0.8 r:0.15
        \move (5 -1) \fcir f:0.8 r:0.15
        \move (5 0) \fcir f:0.8 r:0.15
        \move (5 1) \fcir f:0.8 r:0.15
        \move (5 2) \fcir f:0.8 r:0.15
        \move (5 3) \fcir f:0.8 r:0.15
        \move (5 4) \fcir f:0.8 r:0.15
        \move (6 -4) \fcir f:0.8 r:0.15
        \move (6 -3) \fcir f:0.8 r:0.15
        \move (6 -2) \fcir f:0.8 r:0.15
        \move (6 -1) \fcir f:0.8 r:0.15
        \move (6 0) \fcir f:0.8 r:0.15
        \move (6 1) \fcir f:0.8 r:0.15
        \move (6 2) \fcir f:0.8 r:0.15
        \move (6 3) \fcir f:0.8 r:0.15
        \move (6 4) \fcir f:0.8 r:0.15
        \move (7 -4) \fcir f:0.8 r:0.15
        \move (7 -3) \fcir f:0.8 r:0.15
        \move (7 -2) \fcir f:0.8 r:0.15
        \move (7 -1) \fcir f:0.8 r:0.15
        \move (7 0) \fcir f:0.8 r:0.15
        \move (7 1) \fcir f:0.8 r:0.15
        \move (7 2) \fcir f:0.8 r:0.15
        \move (7 3) \fcir f:0.8 r:0.15
        \move (7 4) \fcir f:0.8 r:0.15
        \move (8 -4) \fcir f:0.8 r:0.15
        \move (8 -3) \fcir f:0.8 r:0.15
        \move (8 -2) \fcir f:0.8 r:0.15
        \move (8 -1) \fcir f:0.8 r:0.15
        \move (8 0) \fcir f:0.8 r:0.15
        \move (8 1) \fcir f:0.8 r:0.15
        \move (8 2) \fcir f:0.8 r:0.15
        \move (8 3) \fcir f:0.8 r:0.15
        \move (8 4) \fcir f:0.8 r:0.15
        \move (9 -4) \fcir f:0.8 r:0.15
        \move (9 -3) \fcir f:0.8 r:0.15
        \move (9 -2) \fcir f:0.8 r:0.15
        \move (9 -1) \fcir f:0.8 r:0.15
        \move (9 0) \fcir f:0.8 r:0.15
        \move (9 1) \fcir f:0.8 r:0.15
        \move (9 2) \fcir f:0.8 r:0.15
        \move (9 3) \fcir f:0.8 r:0.15
        \move (9 4) \fcir f:0.8 r:0.15
        \move (10 -4) \fcir f:0.8 r:0.15
        \move (10 -3) \fcir f:0.8 r:0.15
        \move (10 -2) \fcir f:0.8 r:0.15
        \move (10 -1) \fcir f:0.8 r:0.15
        \move (10 0) \fcir f:0.8 r:0.15
        \move (10 1) \fcir f:0.8 r:0.15
        \move (10 2) \fcir f:0.8 r:0.15
        \move (10 3) \fcir f:0.8 r:0.15
        \move (10 4) \fcir f:0.8 r:0.15
                          
    \end{texdraw}\end{center}

The Newton subdivision of the tropical curve is:

   \vspace*{0.5cm}

   \begin{center}
            
    \begin{texdraw}
       \drawdim cm  \relunitscale 0.5  
       \linewd 0.05
       \relunitscale0.52
        \move (3 23)        
        \lvec (6 17)
        \move (6 17)        
        \lvec (6 6)
        \move (6 6)        
        \lvec (5 0)
        \move (5 0)        
        \lvec (3 0)
        \move (3 0)        
        \lvec (0 6)
        \move (0 6)        
        \lvec (0 17)
        \move (0 17)        
        \lvec (1 23)
        \move (1 23)        
        \lvec (3 23)

        \move (3 21)        
        \lvec (3 23)
        \move (4 21)        
        \lvec (4 19)
        \move (4 19)        
        \lvec (3 21)
        \move (2 21)        
        \lvec (2 23)
        \move (3 21)        
        \lvec (2 21)
        \move (5 19)        
        \lvec (5 17)
        \move (5 17)        
        \lvec (4 19)
        \move (1 21)        
        \lvec (1 23)
        \move (2 21)        
        \lvec (1 21)
        \move (6 15)        
        \lvec (5 17)
        \move (3 20)        
        \lvec (3 21)
        \move (3 19)        
        \lvec (3 20)
        \move (3 18)        
        \lvec (3 19)
        \move (4 19)        
        \lvec (4 17)
        \move (4 17)        
        \lvec (3 18)
        \move (0 16)        
        \lvec (2 21)
        \move (3 18)        
        \lvec (0 16)
        \move (0 16)        
        \lvec (1 23)
        \move (1 21)        
        \lvec (0 16)
        \move (5 17)        
        \lvec (5 15)
        \move (5 15)        
        \lvec (4 17)
        \move (6 13)        
        \lvec (5 15)
        \move (4 17)        
        \lvec (4 15)
        \move (3 16)        
        \lvec (3 18)
        \move (4 15)        
        \lvec (3 16)
        \move (5 15)        
        \lvec (5 13)
        \move (5 13)        
        \lvec (4 15)
        \move (1 14)        
        \lvec (3 18)
        \move (3 16)        
        \lvec (1 14)
        \move (1 14)        
        \lvec (0 16)
        \move (6 11)        
        \lvec (5 13)
        \move (4 13)        
        \lvec (4 12)
        \move (4 14)        
        \lvec (4 13)
        \move (4 15)        
        \lvec (4 14)
        \move (3 15)        
        \lvec (3 16)
        \move (3 14)        
        \lvec (3 15)
        \move (4 12)        
        \lvec (3 14)
        \move (5 13)        
        \lvec (5 11)
        \move (5 11)        
        \lvec (4 12)
        \move (3 14)        
        \lvec (1 14)
        \move (6 9)        
        \lvec (5 11)
        \move (4 12)        
        \lvec (2 12)
        \move (2 12)        
        \lvec (1 14)
        \move (5 11)        
        \lvec (5 9)
        \move (5 9)        
        \lvec (4 12)
        \move (6 7)        
        \lvec (5 9)
        \move (4 11)        
        \lvec (4 12)
        \move (5 9)        
        \lvec (4 11)
        \move (4 11)        
        \lvec (2 11)
        \move (2 11)        
        \lvec (2 12)
        \move (1 14)        
        \lvec (1 12)
        \move (1 12)        
        \lvec (0 14)
        \move (5 9)        
        \lvec (3 9)
        \move (3 9)        
        \lvec (2 11)
        \move (2 11)        
        \lvec (1 14)
        \move (2 11)        
        \lvec (1 12)
        \move (3 5)        
        \lvec (5 9)
        \move (6 7)        
        \lvec (3 5)
        \move (5 9)        
        \lvec (3 7)
        \move (3 8)        
        \lvec (3 9)
        \move (3 7)        
        \lvec (3 8)
        \move (3 5)        
        \lvec (3 7)
        \move (1 12)        
        \lvec (1 10)
        \move (1 10)        
        \lvec (0 12)
        \move (5 0)        
        \lvec (6 7)
        \move (6 7)        
        \lvec (4 2)
        \move (3 4)        
        \lvec (3 5)
        \move (3 3)        
        \lvec (3 4)
        \move (3 2)        
        \lvec (3 3)
        \move (4 2)        
        \lvec (3 2)
        \move (6 7)        
        \lvec (5 2)
        \move (5 2)        
        \lvec (4 2)
        \move (5 0)        
        \lvec (5 2)
        \move (2 9)        
        \lvec (2 8)
        \move (2 10)        
        \lvec (2 9)
        \move (2 11)        
        \lvec (2 10)
        \move (2 8)        
        \lvec (1 10)
        \move (3 7)        
        \lvec (2 8)
        \move (1 10)        
        \lvec (1 8)
        \move (1 8)        
        \lvec (0 10)
        \move (2 8)        
        \lvec (2 6)
        \move (2 6)        
        \lvec (1 8)
        \move (3 5)        
        \lvec (2 6)
        \move (1 8)        
        \lvec (1 6)
        \move (1 6)        
        \lvec (0 8)
        \move (2 6)        
        \lvec (2 4)
        \move (2 4)        
        \lvec (1 6)
        \move (3 2)        
        \lvec (2 4)
        \move (1 6)        
        \lvec (1 4)
        \move (4 0)        
        \lvec (4 2)
        \move (2 4)        
        \lvec (2 2)
        \move (3 0)        
        \lvec (3 2)
        \move (0 0) \fcir f:0.6 r:0.38
        \move (0 1) \fcir f:0.6 r:0.38
        \move (0 2) \fcir f:0.6 r:0.38
        \move (0 3) \fcir f:0.6 r:0.38
        \move (0 4) \fcir f:0.6 r:0.38
        \move (0 5) \fcir f:0.6 r:0.38
        \move (0 6) \fcir f:0.6 r:0.38
        \move (0 7) \fcir f:0.6 r:0.38
        \move (0 8) \fcir f:0.6 r:0.38
        \move (0 9) \fcir f:0.6 r:0.38
        \move (0 10) \fcir f:0.6 r:0.38
        \move (0 11) \fcir f:0.6 r:0.38
        \move (0 12) \fcir f:0.6 r:0.38
        \move (0 13) \fcir f:0.6 r:0.38
        \move (0 14) \fcir f:0.6 r:0.38
        \move (0 15) \fcir f:0.6 r:0.38
        \move (0 16) \fcir f:0.6 r:0.38
        \move (0 17) \fcir f:0.6 r:0.38
        \move (0 18) \fcir f:0.6 r:0.38
        \move (0 19) \fcir f:0.6 r:0.38
        \move (0 20) \fcir f:0.6 r:0.38
        \move (0 21) \fcir f:0.6 r:0.38
        \move (0 22) \fcir f:0.6 r:0.38
        \move (0 23) \fcir f:0.6 r:0.38
        \move (1 0) \fcir f:0.6 r:0.38
        \move (1 1) \fcir f:0.6 r:0.38
        \move (1 2) \fcir f:0.6 r:0.38
        \move (1 3) \fcir f:0.6 r:0.38
        \move (1 4) \fcir f:0.6 r:0.38
        \move (1 5) \fcir f:0.6 r:0.38
        \move (1 6) \fcir f:0.6 r:0.38
        \move (1 7) \fcir f:0.6 r:0.38
        \move (1 8) \fcir f:0.6 r:0.38
        \move (1 9) \fcir f:0.6 r:0.38
        \move (1 10) \fcir f:0.6 r:0.38
        \move (1 11) \fcir f:0.6 r:0.38
        \move (1 12) \fcir f:0.6 r:0.38
        \move (1 13) \fcir f:0.6 r:0.38
        \move (1 14) \fcir f:0.6 r:0.38
        \move (1 15) \fcir f:0.6 r:0.38
        \move (1 16) \fcir f:0.6 r:0.38
        \move (1 17) \fcir f:0.6 r:0.38
        \move (1 18) \fcir f:0.6 r:0.38
        \move (1 19) \fcir f:0.6 r:0.38
        \move (1 20) \fcir f:0.6 r:0.38
        \move (1 21) \fcir f:0.6 r:0.38
        \move (1 22) \fcir f:0.6 r:0.38
        \move (1 23) \fcir f:0.6 r:0.38
        \move (2 0) \fcir f:0.6 r:0.38
        \move (2 1) \fcir f:0.6 r:0.38
        \move (2 2) \fcir f:0.6 r:0.38
        \move (2 3) \fcir f:0.6 r:0.38
        \move (2 4) \fcir f:0.6 r:0.38
        \move (2 5) \fcir f:0.6 r:0.38
        \move (2 6) \fcir f:0.6 r:0.38
        \move (2 7) \fcir f:0.6 r:0.38
        \move (2 8) \fcir f:0.6 r:0.38
        \move (2 9) \fcir f:0.6 r:0.38
        \move (2 10) \fcir f:0.6 r:0.38
        \move (2 11) \fcir f:0.6 r:0.38
        \move (2 12) \fcir f:0.6 r:0.38
        \move (2 13) \fcir f:0.6 r:0.38
        \move (2 14) \fcir f:0.6 r:0.38
        \move (2 15) \fcir f:0.6 r:0.38
        \move (2 16) \fcir f:0.6 r:0.38
        \move (2 17) \fcir f:0.6 r:0.38
        \move (2 18) \fcir f:0.6 r:0.38
        \move (2 19) \fcir f:0.6 r:0.38
        \move (2 20) \fcir f:0.6 r:0.38
        \move (2 21) \fcir f:0.6 r:0.38
        \move (2 22) \fcir f:0.6 r:0.38
        \move (2 23) \fcir f:0.6 r:0.38
        \move (3 0) \fcir f:0.6 r:0.38
        \move (3 1) \fcir f:0.6 r:0.38
        \move (3 2) \fcir f:0.6 r:0.38
        \move (3 3) \fcir f:0.6 r:0.38
        \move (3 4) \fcir f:0.6 r:0.38
        \move (3 5) \fcir f:0.6 r:0.38
        \move (3 6) \fcir f:0.6 r:0.38
        \move (3 7) \fcir f:0.6 r:0.38
        \move (3 8) \fcir f:0.6 r:0.38
        \move (3 9) \fcir f:0.6 r:0.38
        \move (3 10) \fcir f:0.6 r:0.38
        \move (3 11) \fcir f:0.6 r:0.38
        \move (3 12) \fcir f:0.6 r:0.38
        \move (3 13) \fcir f:0.6 r:0.38
        \move (3 14) \fcir f:0.6 r:0.38
        \move (3 15) \fcir f:0.6 r:0.38
        \move (3 16) \fcir f:0.6 r:0.38
        \move (3 17) \fcir f:0.6 r:0.38
        \move (3 18) \fcir f:0.6 r:0.38
        \move (3 19) \fcir f:0.6 r:0.38
        \move (3 20) \fcir f:0.6 r:0.38
        \move (3 21) \fcir f:0.6 r:0.38
        \move (3 22) \fcir f:0.6 r:0.38
        \move (3 23) \fcir f:0.6 r:0.38
        \move (4 0) \fcir f:0.6 r:0.38
        \move (4 1) \fcir f:0.6 r:0.38
        \move (4 2) \fcir f:0.6 r:0.38
        \move (4 3) \fcir f:0.6 r:0.38
        \move (4 4) \fcir f:0.6 r:0.38
        \move (4 5) \fcir f:0.6 r:0.38
        \move (4 6) \fcir f:0.6 r:0.38
        \move (4 7) \fcir f:0.6 r:0.38
        \move (4 8) \fcir f:0.6 r:0.38
        \move (4 9) \fcir f:0.6 r:0.38
        \move (4 10) \fcir f:0.6 r:0.38
        \move (4 11) \fcir f:0.6 r:0.38
        \move (4 12) \fcir f:0.6 r:0.38
        \move (4 13) \fcir f:0.6 r:0.38
        \move (4 14) \fcir f:0.6 r:0.38
        \move (4 15) \fcir f:0.6 r:0.38
        \move (4 16) \fcir f:0.6 r:0.38
        \move (4 17) \fcir f:0.6 r:0.38
        \move (4 18) \fcir f:0.6 r:0.38
        \move (4 19) \fcir f:0.6 r:0.38
        \move (4 20) \fcir f:0.6 r:0.38
        \move (4 21) \fcir f:0.6 r:0.38
        \move (4 22) \fcir f:0.6 r:0.38
        \move (4 23) \fcir f:0.6 r:0.38
        \move (5 0) \fcir f:0.6 r:0.38
        \move (5 1) \fcir f:0.6 r:0.38
        \move (5 2) \fcir f:0.6 r:0.38
        \move (5 3) \fcir f:0.6 r:0.38
        \move (5 4) \fcir f:0.6 r:0.38
        \move (5 5) \fcir f:0.6 r:0.38
        \move (5 6) \fcir f:0.6 r:0.38
        \move (5 7) \fcir f:0.6 r:0.38
        \move (5 8) \fcir f:0.6 r:0.38
        \move (5 9) \fcir f:0.6 r:0.38
        \move (5 10) \fcir f:0.6 r:0.38
        \move (5 11) \fcir f:0.6 r:0.38
        \move (5 12) \fcir f:0.6 r:0.38
        \move (5 13) \fcir f:0.6 r:0.38
        \move (5 14) \fcir f:0.6 r:0.38
        \move (5 15) \fcir f:0.6 r:0.38
        \move (5 16) \fcir f:0.6 r:0.38
        \move (5 17) \fcir f:0.6 r:0.38
        \move (5 18) \fcir f:0.6 r:0.38
        \move (5 19) \fcir f:0.6 r:0.38
        \move (5 20) \fcir f:0.6 r:0.38
        \move (5 21) \fcir f:0.6 r:0.38
        \move (5 22) \fcir f:0.6 r:0.38
        \move (5 23) \fcir f:0.6 r:0.38
        \move (6 0) \fcir f:0.6 r:0.38
        \move (6 1) \fcir f:0.6 r:0.38
        \move (6 2) \fcir f:0.6 r:0.38
        \move (6 3) \fcir f:0.6 r:0.38
        \move (6 4) \fcir f:0.6 r:0.38
        \move (6 5) \fcir f:0.6 r:0.38
        \move (6 6) \fcir f:0.6 r:0.38
        \move (6 7) \fcir f:0.6 r:0.38
        \move (6 8) \fcir f:0.6 r:0.38
        \move (6 9) \fcir f:0.6 r:0.38
        \move (6 10) \fcir f:0.6 r:0.38
        \move (6 11) \fcir f:0.6 r:0.38
        \move (6 12) \fcir f:0.6 r:0.38
        \move (6 13) \fcir f:0.6 r:0.38
        \move (6 14) \fcir f:0.6 r:0.38
        \move (6 15) \fcir f:0.6 r:0.38
        \move (6 16) \fcir f:0.6 r:0.38
        \move (6 17) \fcir f:0.6 r:0.38
        \move (6 18) \fcir f:0.6 r:0.38
        \move (6 19) \fcir f:0.6 r:0.38
        \move (6 20) \fcir f:0.6 r:0.38
        \move (6 21) \fcir f:0.6 r:0.38
        \move (6 22) \fcir f:0.6 r:0.38
        \move (6 23) \fcir f:0.6 r:0.38
       \move (3 23) 
       \fcir f:0 r:0.47
       \move (4 21) 
       \fcir f:0 r:0.47
       \move (3 21) 
       \fcir f:0 r:0.47
       \move (4 19) 
       \fcir f:0 r:0.47
       \move (2 23) 
       \fcir f:0 r:0.47
       \move (2 21) 
       \fcir f:0 r:0.47
       \move (5 19) 
       \fcir f:0 r:0.47
       \move (5 17) 
       \fcir f:0 r:0.47
       \move (1 23) 
       \fcir f:0 r:0.47
       \move (1 21) 
       \fcir f:0 r:0.47
       \move (6 17) 
       \fcir f:0 r:0.47
       \move (6 15) 
       \fcir f:0 r:0.47
       \move (3 20) 
       \fcir f:0 r:0.47
       \move (3 19) 
       \fcir f:0 r:0.47
       \move (4 17) 
       \fcir f:0 r:0.47
       \move (3 18) 
       \fcir f:0 r:0.47
       \move (2 20) 
       \fcir f:0 r:0.47
       \move (2 19) 
       \fcir f:0 r:0.47
       \move (2 18) 
       \fcir f:0 r:0.47
       \move (0 16) 
       \fcir f:0 r:0.47
       \move (0 17) 
       \fcir f:0 r:0.47
       \move (5 15) 
       \fcir f:0 r:0.47
       \move (6 13) 
       \fcir f:0 r:0.47
       \move (4 15) 
       \fcir f:0 r:0.47
       \move (3 16) 
       \fcir f:0 r:0.47
       \move (5 13) 
       \fcir f:0 r:0.47
       \move (1 14) 
       \fcir f:0 r:0.47
       \move (6 11) 
       \fcir f:0 r:0.47
       \move (4 14) 
       \fcir f:0 r:0.47
       \move (3 15) 
       \fcir f:0 r:0.47
       \move (4 13) 
       \fcir f:0 r:0.47
       \move (3 14) 
       \fcir f:0 r:0.47
       \move (4 12) 
       \fcir f:0 r:0.47
       \move (5 11) 
       \fcir f:0 r:0.47
       \move (6 9) 
       \fcir f:0 r:0.47
       \move (2 12) 
       \fcir f:0 r:0.47
       \move (5 9) 
       \fcir f:0 r:0.47
       \move (6 7) 
       \fcir f:0 r:0.47
       \move (4 11) 
       \fcir f:0 r:0.47
       \move (2 11) 
       \fcir f:0 r:0.47
       \move (0 14) 
       \fcir f:0 r:0.47
       \move (1 12) 
       \fcir f:0 r:0.47
       \move (3 9) 
       \fcir f:0 r:0.47
       \move (3 5) 
       \fcir f:0 r:0.47
       \move (3 8) 
       \fcir f:0 r:0.47
       \move (3 7) 
       \fcir f:0 r:0.47
       \move (0 12) 
       \fcir f:0 r:0.47
       \move (1 10) 
       \fcir f:0 r:0.47
       \move (6 6) 
       \fcir f:0 r:0.47
       \move (5 0) 
       \fcir f:0 r:0.47
       \move (4 5) 
       \fcir f:0 r:0.47
       \move (4 4) 
       \fcir f:0 r:0.47
       \move (4 3) 
       \fcir f:0 r:0.47
       \move (3 4) 
       \fcir f:0 r:0.47
       \move (4 2) 
       \fcir f:0 r:0.47
       \move (3 3) 
       \fcir f:0 r:0.47
       \move (3 2) 
       \fcir f:0 r:0.47
       \move (5 2) 
       \fcir f:0 r:0.47
       \move (2 10) 
       \fcir f:0 r:0.47
       \move (2 9) 
       \fcir f:0 r:0.47
       \move (2 8) 
       \fcir f:0 r:0.47
       \move (0 10) 
       \fcir f:0 r:0.47
       \move (1 8) 
       \fcir f:0 r:0.47
       \move (2 6) 
       \fcir f:0 r:0.47
       \move (0 8) 
       \fcir f:0 r:0.47
       \move (1 6) 
       \fcir f:0 r:0.47
       \move (2 4) 
       \fcir f:0 r:0.47
       \move (0 6) 
       \fcir f:0 r:0.47
       \move (1 4) 
       \fcir f:0 r:0.47
       \move (4 0) 
       \fcir f:0 r:0.47
       \move (2 2) 
       \fcir f:0 r:0.47
       \move (3 0) 
       \fcir f:0 r:0.47
   \end{texdraw}
     
   \end{center}

\subsection{The number of terms of the  non-homogeneous $A$-polynomial
of twist knots}
\lbl{sub.terms}

In \cite{GS} we explicitly computed the non-homogeneous $A$-polynomial 
$(A^{nh}_{K_p},B_{K_p})$ of the {\em twist knots} $K_p$ for $p=-8,\dots,11$.
$K_p$ is the knot obtained by $1/p$ surgery on one component of the 
Whitehead link. 
This includes the following knots in the Rolfsen notation:
$$
K_1 = 3_1, K_2 = 5_2, K_3 = 7_2, K_4 = 9_2,
\qquad
K_{-1} = 4_1, K_{-2} = 6_1, K_{-3} = 8_1, K_{-4} = 10_1.
$$ 
The computations reveal that for $p=1,\dots,11$, $A^{nh}_{K_p}$ has 
$(L,M,q)$ degree equal to 
$$
\left(2p-1,8p-4,\frac{17}{2} p(p-1) + 2\right)
$$ 
The total number of terms of the 3-variable polynomial $A^{nh}_{K_p}$
is given by
$$
139976, 80252, 41996, 19402, 7406, 2112, 346, 22
$$
for $p=-8,\dots,-1$, and by
$$
4, 98, 908, 4100, 12236, 28978, 58668, 106800, 179814, 284998, 430652
$$
for $p=1,\dots,11$.
Using the data from \cite{GS}, the author has computed the tropical curves 
(homogeneous or not) of all twist knots $K_p$ with $p=-8,\dots,11$.
Needless to say, the output of the 
computations it too large to be displayed in the paper.

\subsection{Acknowledgment}
The idea of the present paper was conceived during the New York
Conference on {\em Interactions between Hyperbolic Geometry, 
Quantum Topology and Number Theory} in New York in the summer of 2009.
An early version of the present paper appeared in the New Zealand Conference
on {\em Topological Quantum Field Theory and Knot Homology Theory}
in January 2010. 
The author wishes to thank the organizers of the New York Conference, 
A. Champanerkar, O. Dasbach, E. Kalfagianni, I. Kofman, W. Neumann and 
N. Stoltzfus and the New Zealand Conference  R. Fenn, D. Gauld
and V. Jones  for their hospitality and for creating a
stimulating atmosphere.
The author also wishes to thank J. Yu for many enlightening conversations and
T. Markwig for the drawing implementation of {\tt polymake}.

%
%

\ifx\undefined\bysame
        \newcommand{\bysame}{\leavevmode\hbox
to3em{\hrulefill}\,}
\fi

\end{document}